\newtheorem{theorem}{Theorem}[section]
\newtheorem{definition}[theorem]{Definition}
\theoremstyle{definition}
\newtheorem{example}[theorem]{Example}
\theoremstyle{remark}
\newtheorem{remark}[theorem]{Remark}
\newtheorem{lemma}{Lemma}[section]
\numberwithin{equation}{section}%
\numberwithin{table}{section}%
\numberwithin{figure}{section}
\def\3bar{{|\hspace{-.02in}|\hspace{-.02in}|}}
\def\td{\text{div}}
\def\tc{\text{curl}}
\def\d{\text{d}}
\begin{document}
\title[An $H^2$(curl)-conforming FEs in 2D and its applications]{An $\bm H^2$(curl)-conforming finite element in 2D and its applications  to the quad-curl  problem}
\author{Qian Zhang}
\email{qianzhang@csrc.ac.cn}
\address{Beijing Computational Science Research Center, Beijing, China}

\author{Lixiu Wang}
\email{lxwang@csrc.ac.cn}
\address{Beijing Computational Science Research Center, Beijing, China}

\author{Zhimin Zhang}
\email{zmzhang@csrc.ac.cn; zzhang@math.wayne.edu}
\address{Beijing Computational Science Research Center, Beijing, China; Department of Mathematics, Wayne State University, Detroit, MI 48202, USA. }

\keywords
{$H^2$(curl)-conforming finite elements, quad-curl problems, interpolation errors, convergence analysis.}
\subjclass[2000]{65N30 \and 35Q60 \and 65N15 \and 35B45}
\begin{abstract}
In this paper, we first construct the $H^2$(curl)-conforming finite elements both on a rectangle and a triangle.  They possess some fascinating properties which have been proven by a rigorous theoretical analysis. Then we apply the elements to construct a finite element space for discretizing quad-curl problems.  Convergence orders $O(h^k)$ in the $H$(curl) norm and $O(h^{k-1})$ in the $H^2$(curl) norm are  established. Numerical experiments are provided to confirm our theoretical results.
\end{abstract}	
\maketitle

\section{Introduction}
The previous academic works \cite{Monk2003,Jin1993Finite,Sun2016Finite,Li2013Time,WGmaxwell} present that  the Sobolev space $H(\tc;\Omega)$ plays a vital role in the variational theory of Maxwell's equations. Hence the so-called $H$(curl)-conforming finite elements (FEs) in this space are needed for approximating the Maxwell's equations and related problems. A good choice was proposed by N\'ed\'elec in  \cite{Lec1980A,Lec1986A}  using  quantities (moments of tangential components of vector fields) on edges and faces, thus the name edge element. Edge elements can be applied to handle complex geometry and discontinuous electromagnetic properties which occur when different materials are involved in the electromagnetic field. However, as the problems we are concerned with(e.g. transmission problems \cite{Cakoni2017A} and magnetohydrodynamics equations  \cite{Qingguo2012A}) get increasingly complex, functions in $H^2(\tc)$ are required, thus the N\'ed\'elec elements fail to meet our continuity requirements. Moreover, in \cite{Qingguo2012A, Sun2016A}, the authors  believe constructing such an element should be very difficult and expensive. These reasons motivate us to research FEs conforming in $H^2$(curl).

This paper starts by describing a class of curl-curl-conforming or $H^2$(curl)-conforming FEs which, to the best of the authors' knowledge, are not available in the existing literature. In our construction, the number of the degrees of freedom (DOFs) for the lowest-order element is only 24 for both a triangle and a rectangle. Recall the number of the DOFs for the $H^2$-conforming element, it is $18\times 2 =  36$ for a triangle (Bell element) and $16\times 2 = 32$ for a rectangle (BFS element). Also, it may lead to false solutions when using the $H^2$-conforming element to solve Maxwell's problems. The unisolvence and conformity can be verified by a straightforward mathematical analysis. Moreover, our new elements possess some good properties by which we obtain the error estimate of interpolation.

In the second part, we use our elements to solve the quad-curl equations which are involved in  various practical problems, such as in inverse electromagnetic scattering theory \cite{Cakoni2017A,Monk2012Finite, Sun2016A} or in magnetohydrodynamics \cite{Zheng2011A}.
Unlike the low-order electromagnetic problem, which have been extensively studied both in mathematical theory  and  numerical methods \cite{Monk2003,Peter1992A,Teixeira2008Time,Jin1993Finite,Daveau2009A,Li2013Time,Monk1994Superconvergence}, only limited work has been done for quad-curl problems.
Zheng et al. developed a nonconforming finite element method for the problem in \cite{Zheng2011A}. Although the method has small number of DOFs, it bears the disadvantage of low accuracy. Based on N\'ed\'elec elements, a discontinuous Galerkin method and a weak Galerkin method were presented in \cite{Qingguo2012A} and \cite{quadcurlWG}, respectively. Another approach  to deal with the quad-curl operator is to introduce an auxiliary variable and reduce the original problem to a second-order system \cite{Sun2016A}.  Very recently, Brenner et al. proposed a Hodge decomposition method in \cite{Brenner2017Hodge} by using  Lagrange elements.
Zhang used a different mixed scheme \cite{Zhang2018M2NA}, which relaxes the regularity require-
ment of the solution.
Different from all aforementioned methods, in this paper, we propose a conforming FE method by using our $H^2$(curl) elements and give the error estimate in the sense of $H^2(\tc)$-, $H(\tc)$- and $L^2$- norms.

The rest of the paper is organized as follows. In Section 2 we list some function spaces and notations. Section 3 is the most technical part, where we define the $H^2$(curl)-conforming FEs on rectangles and estimate the interpolation error.  In Section 4 we give the definition of FEs and some properties on triangles. In Section 5 we use our FEs to solve the quad-curl problems and give the error estimates.  In section 6 we provide numerical examples to  verify correctness and efficiency of our method. Finally,  some concluding remarks and possible future works are given in  Section 7. In the Appendix we list the basis functions on a reference rectangle and triangle of the lowest-order element.

\section{Preliminaries}
Let $\Omega\in\mathbb{R}^2$ be a convex Lipschitz domain. $\bm n$ is the unit outward normal vector to $\partial \Omega$. We adopt standard notations for Sobolev spaces such as $H^m(D)$ or $H_0^m(D)$ on a sub-domain $D\subset\Omega$ equipped with the norm $\left\|\cdot\right\|_{m,D}$ and the semi-norm $\left|\cdot\right|_{m,D}$. If $m=0$,  the space $H^0 (D)$ coincides with $ L^2(D)$ equipped with the norm $\|\cdot\|_{D}$, and when $D=\Omega$, we drop the subscript $D$. We use  $\bm H^m(D)$   and ${\bm L}^2(D)$ to denote the vector-valued Sobolev spaces $\left(H^m(D)\right)^2$ and $\left(L^2(D)\right)^2$.

We define
\begin{align*}
H(\text{curl};D)&:=\{\bm u \in {\bm L}^2(D):\; \nabla \times \bm u \in L^2(D)\},\\
H^2(\text{curl};D)&:=\{\bm u \in {\bm L}^2(D):\; \nabla \times \bm u \in L^2(D),\;(\nabla \times)^2 \bm u \in \bm L^2(D)\},
\end{align*}
whose scalar products and norms are defined by
$(\bm u,\bm v)_{H^s(\tc;D)}=(\bm u,\bm v)+\sum_{j=1}^s(\nabla\times \bm u,\nabla\times \bm v)$ and
$\left\|\bm u\right\|_{H^s(\tc;D)}=\sqrt{(\bm u,\bm u)_{H^s(\tc;D)}}$ with $s=1,\;2.$
The spaces $H_0^s(\text{curl};D)\;(s=1,\;2)$ are defined as follows:
\begin{align*}
&H_0(\text{curl};D):=\{\bm u \in H(\text{curl};D):\;{\mathbf n}\times\bm u=0\; \text{on}\ \partial D\},\\
&H^2_0(\text{curl};D):=\{\bm u \in H^2(\text{curl};D):\;{\mathbf n}\times\bm u=0\; \text{and}\; \nabla\times \bm u=0\;\; \text{on}\ \partial D\}.
\end{align*}
The space of $\bm L^2(D)$ functions with square-integrable divergence is denoted by $H(\td;D)$ and defined by
\[H(\text{div};D) :=\{\bm u\in {\bm L}^2(D):\; \nabla\cdot \bm u\in L^2(D)\},\]
with the associated inner product
$(\bm u,\bm v)_{H(\td;D)}=(\bm u,\bm v)+(\nabla\cdot \bm u,\nabla\cdot \bm v)$ and the norm
$\left\|\bm u\right\|_{H(\td;D)}=\sqrt{(\bm u,\bm u)_{H(\td;D)}}$.

Taking the divergence-free condition into account, we define
\begin{align}\label{spac-01}
&Y=\left\{\bm u\in H_0(\tc;D)\big|(\bm u,\nabla p)=0,\;\;\forall p\in H_0^1(D)\right\},\\
&X=\left\{\bm u\in H_0^2(\tc;D)\big|(\bm u,\nabla p)=0,\;\;\forall p\in H_0^1(D)\right\},\label{spac-02}\\
&H(\text{div}^0;D) :=\{\bm u\in H(\text{div};D):\; \nabla\cdot \bm u=0\;\text{in}\; D\}.
\end{align}
$Q_{i,j}(D)$ represents the space of polynomials on $D$ with the  degrees of the variables $x,\;y$ no more than $i,\;j$. For simplicity, we drop a  subscript $i$ when $i=j$. And $P_i(D)$ represents the space of polynomials on $D$ with a degree of no more than $i$.
\begin{lemma}\label{Helm}
	Let $\nabla H_0^1(\Omega)$ be the set of gradients of functions in $H_0^1(\Omega)$. Then $\nabla H_0^1(\Omega)$ is a closed subspace of $H_0^2(\tc;\Omega)$ such that
	\[H_0^2(\tc;\Omega)=X\oplus \nabla H_0^1(\Omega),\]
\end{lemma}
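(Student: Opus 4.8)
The plan is to establish three ingredients separately: that $\nabla H_0^1(\Omega)$ actually sits inside $H_0^2(\tc;\Omega)$, that it is closed in the $H^2(\tc;\Omega)$-topology, and that together with $X$ it spans the whole space in a direct (indeed $L^2$-orthogonal) sum.

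First I would verify the inclusion $\nabla H_0^1(\Omega)\subset H_0^2(\tc;\Omega)$. For $p\in H_0^1(\Omega)$ the gradient $\nabla p$ belongs to $\bm L^2(\Omega)$, and since the curl of a gradient vanishes distributionally we get $\nabla\times\nabla p=0$ and $(\nabla\times)^2\nabla p=0$, so all the $L^2$-regularity requirements of $H^2(\tc;\Omega)$ hold trivially. For the boundary conditions, the identity $\nabla\times\nabla p=0$ already disposes of the requirement $\nabla\times\bm u=0$ on $\partial\Omega$; the remaining condition $\bm n\times\nabla p=0$ follows because $p=0$ on $\partial\Omega$ forces the tangential derivative of $p$ along $\partial\Omega$ to vanish, and $\bm n\times\nabla p$ is exactly this tangential component, interpreted through the trace characterization of $H_0^1(\Omega)$.

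Next, for closedness, I would take a sequence $\nabla p_n\to\bm v$ in the $H^2(\tc;\Omega)$-norm. Convergence in that norm implies convergence in $\bm L^2(\Omega)$, so $\{\nabla p_n\}$ is Cauchy in $\bm L^2(\Omega)$. The Poincar\'e inequality $\|q\|_{1,\Omega}\le C\|\nabla q\|_{\Omega}$ on $H_0^1(\Omega)$ then makes $\{p_n\}$ Cauchy in $H_0^1(\Omega)$; hence $p_n\to p$ in $H_0^1(\Omega)$ and $\nabla p_n\to\nabla p$ in $\bm L^2(\Omega)$. Uniqueness of the $L^2$-limit gives $\bm v=\nabla p\in\nabla H_0^1(\Omega)$, so the subspace is closed. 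For the decomposition itself, the intersection is trivial: any $\bm u\in X\cap\nabla H_0^1(\Omega)$ has the form $\nabla p$ and satisfies $(\nabla p,\nabla q)=0$ for all $q\in H_0^1(\Omega)$, so taking $q=p$ yields $\nabla p=0$, i.e. $\bm u=0$. For surjectivity, given $\bm u\in H_0^2(\tc;\Omega)$ I would define $p\in H_0^1(\Omega)$ as the unique solution (by the Lax--Milgram theorem, whose coercivity hypothesis is supplied by Poincar\'e's inequality) of
\[(\nabla p,\nabla q)=(\bm u,\nabla q),\qquad\forall\, q\in H_0^1(\Omega),\]
and set $\bm u_0=\bm u-\nabla p$. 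Then $\bm u_0\in H_0^2(\tc;\Omega)$ by the inclusion established above, and $(\bm u_0,\nabla q)=0$ for all $q\in H_0^1(\Omega)$, so $\bm u_0\in X$; thus $\bm u=\bm u_0+\nabla p$ with the two components in $X$ and $\nabla H_0^1(\Omega)$, and the sum is direct by the intersection argument.

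I expect the main obstacle to be the rigorous justification of the boundary condition $\bm n\times\nabla p=0$ in the inclusion step, since it requires interpreting the tangential trace of $\nabla p$ and relating it to the vanishing of $p$ on $\partial\Omega$ through trace theory rather than pointwise differentiation. The remaining steps are standard applications of Poincar\'e's inequality and the Lax--Milgram theorem.
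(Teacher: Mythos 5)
Your proof is correct, and it is in fact more complete than the paper's own argument. The paper treats the closedness of $\nabla H_0^1(\Omega)$ in $H_0(\tc;\Omega)$ as a known fact and lifts it to $H_0^2(\tc;\Omega)$ by the single observation that convergence in the $H^2(\tc;\Omega)$-norm implies convergence in the $H(\tc;\Omega)$-norm, the inclusion $\nabla H_0^1(\Omega)\subset H_0^2(\tc;\Omega)$ being immediate because $(\nabla\times)^2\nabla p=0$; it then stops, leaving the existence and uniqueness of the splitting $\bm u=\bm u_0+\nabla p$ implicit. You instead prove closedness from first principles via the Poincar\'e inequality (a Cauchy sequence $\nabla p_n$ in $\bm L^2$ forces $p_n$ to be Cauchy in $H_0^1$), which is a clean self-contained alternative to the paper's reduction to the $H(\tc)$ case, and you supply the half of the lemma the paper omits: the Lax--Milgram construction of $p$, the verification that $\bm u-\nabla p\in X$, and the trivial-intersection argument obtained by testing with $q=p$. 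Your one flagged concern, the tangential trace condition $\bm n\times\nabla p=0$, is handled exactly as you suggest, by density of $C_0^\infty(\Omega)$ in $H_0^1(\Omega)$ together with continuity of the tangential trace on $H(\tc;\Omega)$; this is also precisely what the paper's unproved appeal to $\nabla H_0^1(\Omega)\subset H_0(\tc;\Omega)$ is concealing. In short, both arguments yield the same decomposition, but your route replaces a black-box citation with direct estimates and actually establishes the direct-sum statement rather than only the closedness.
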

\begin{proof}
	Using the fact that $\nabla H_0^1(\Omega)$ is a closed subspace of $H_0(\tc;\Omega)$, it's trivial to get the following three conditions:
	\begin{itemize}
		\item Due to $\nabla H_0^1(\Omega)\subset H_0(\tc;\Omega)$ and $\nabla\times\nabla\times\nabla H_0^1(\Omega)=\{0\}$, $\nabla H_0^1(\Omega)\subset H_0^2(\tc;\Omega)$.
		\item if $\bm x,\;\bm y\in \nabla H_0^1(\Omega)$, then $\alpha\bm x+\beta \bm y\in  \nabla H_0^1 (\Omega)$, $\forall\alpha,\beta\in \mathbb R$.
		\item if $\{\bm x_m\}_{m=1}^{\infty}\subset \nabla H_0^1(\Omega)$, and
		$\left\|\bm x_m-\bm x\right\|_{H^2(\tc;\Omega)}\rightarrow 0$, then $\left\|\bm x_m-\bm x\right\|_{H(\tc;\Omega)}\rightarrow 0$, thus $x\in \nabla H_0^1(\Omega)$.
	\end{itemize}
	Consequently, $\nabla H_0^1(\Omega)$ is a closed subspace of $H_0^2(\tc;\Omega)$, which completes the proof.
\end{proof}

\section{$H^2$(\text{curl})-conforming elements on rectangles}\label{rectelem}
The elements we shall define and analyze in this section will be used to discretize the electric field in some high-order Maxwell's equations. We first introduce the following lemma. It shows that  a finite element is conforming in $H^2(\tc)$ if $\nabla\times\bm u$ and  the tangential component of $\bm u$are continuous across the edges of elements.
\begin{lemma}\label{prob2}
Let $\text{K}_1$ and $\text{K}_2$ be two non-overlapping Lipschitz domains having a common edge $\Lambda$ such that $\overline{\text{K}_1}\cap\overline{\text{K}_2} = \Lambda$. Assume that ${\bm u}_1 \in H^2(\text{curl};\text{K}_1)$,
${\bm u}_2 \in H^2(\text{curl};\text{K}_2)$, and $\bm u \in \bm L^2(\text{K}_1 \cup \text{K}_2 \cup \Lambda)$ is defined by
\begin{equation*}
\displaystyle{\bm u}=
\begin{cases}
&\bm u_1,\quad  \text{in}\ \text{K}_1,\\[0.2cm]
&\bm u_2,\quad  \text{in}\ \text{K}_2.
\end{cases}
\end{equation*}
Then $\bm u_1 \times \bm n_1 = -\bm u_2 \times \bm n_2$\ and $\nabla\times \bm u_1=\nabla\times \bm u_2$ on $\Lambda$ implies that $\bm u \in H^2(\text{curl};\text{K}_1 \cup \text{K}_2 \cup \Lambda)$, where $\bm n_i$ ($i=1,2$) is the unit outward normal vector to $\partial K_i$, and note that  $\bm n_1 = - \bm n_2$.
\end{lemma}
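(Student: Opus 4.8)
The plan is to verify straight from the definition that the glued field $\bm u$ satisfies the three requirements for membership in $H^2(\tc;\text{K}_1\cup \text{K}_2\cup\Lambda)$: that $\bm u\in\bm L^2$, that $\nabla\times\bm u\in L^2$, and that $(\nabla\times)^2\bm u\in\bm L^2$. The first is immediate because $\bm u$ restricts to $\bm u_1,\bm u_2\in\bm L^2$ on the two pieces. For the other two the strategy is to compute the distributional curl and double curl of $\bm u$ and to show that they carry no singular contribution concentrated on the interface $\Lambda$. The mechanism is integration by parts on each of $\text{K}_1,\text{K}_2$: each step produces one interface integral over $\Lambda$, and the two hypotheses are precisely the conditions that make these two integrals vanish.

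First I would treat the curl. Put $w_i:=\nabla\times\bm u_i\in L^2(\text{K}_i)$ and let $w$ be the piecewise scalar with $w|_{\text{K}_i}=w_i$. For an arbitrary test function $\phi\in C_0^\infty(\text{K}_1\cup \text{K}_2\cup\Lambda)$, the distributional curl acts by $\langle\nabla\times\bm u,\phi\rangle=\int_{\text{K}_1\cup \text{K}_2}\bm u\cdot(\nabla\times\phi)\,\d x$, and on each piece the Green identity
\[
\int_{\text{K}_i}\bm u_i\cdot(\nabla\times\phi)\,\d x=\int_{\text{K}_i}(\nabla\times\bm u_i)\,\phi\,\d x+\int_{\partial \text{K}_i}(\bm u_i\times\bm n_i)\,\phi\,\d s
\]
applies. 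Summing over $i=1,2$, the outer-boundary contributions drop because $\phi$ has compact support, and the two integrals over $\Lambda$ add up to $\int_\Lambda\big[(\bm u_1\times\bm n_1)+(\bm u_2\times\bm n_2)\big]\phi\,\d s$, which is zero by the tangential-continuity hypothesis $\bm u_1\times\bm n_1=-\bm u_2\times\bm n_2$. Hence $\nabla\times\bm u=w\in L^2(\text{K}_1\cup \text{K}_2\cup\Lambda)$, with $w|_{\text{K}_i}=w_i$.

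Next I would promote this to the second curl. Since $\bm u_i\in H^2(\tc;\text{K}_i)$, the scalar $w_i$ satisfies $\nabla\times w_i=(\nabla\times)^2\bm u_i\in\bm L^2(\text{K}_i)$, which for a scalar is the same as $w_i\in H^1(\text{K}_i)$; thus $w_i$ has a trace in $H^{1/2}(\Lambda)$ and the hypothesis $\nabla\times\bm u_1=\nabla\times\bm u_2$ on $\Lambda$ makes sense as an equality of traces. Let $\bm z$ be the piecewise field with $\bm z|_{\text{K}_i}=(\nabla\times)^2\bm u_i$. Testing $w$ against an arbitrary $\bm\psi\in\big(C_0^\infty(\text{K}_1\cup \text{K}_2\cup\Lambda)\big)^2$ and using the companion identity
\[
\int_{\text{K}_i}w_i\,(\nabla\times\bm\psi)\,\d x=\int_{\text{K}_i}(\nabla\times w_i)\cdot\bm\psi\,\d x-\int_{\partial \text{K}_i}w_i\,(\bm\psi\times\bm n_i)\,\d s
\]
on each piece, summation together with $\bm n_2=-\bm n_1$ collapses the interface contribution to $-\int_\Lambda(w_1-w_2)(\bm\psi\times\bm n_1)\,\d s$, which vanishes by the continuity of $\nabla\times\bm u$ just established. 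Therefore $(\nabla\times)^2\bm u=\bm z\in\bm L^2(\text{K}_1\cup \text{K}_2\cup\Lambda)$, and $\bm u\in H^2(\tc;\text{K}_1\cup \text{K}_2\cup\Lambda)$ as claimed.

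The two integration-by-parts computations are routine; the only point demanding care is the meaning of the interface integrals. The tangential trace $\bm u_i\times\bm n_i$ of an $H(\tc)$ field lies a priori only in $H^{-1/2}(\partial \text{K}_i)$, so the first interface term should be read as a duality pairing against the $H^{1/2}$ trace of $\phi$, and one must confirm the hypotheses are equalities in the relevant trace spaces. The second interface term is unproblematic once $w_i\in H^1(\text{K}_i)$ is noted. The structural heart of the lemma is exactly this pairing of hypotheses with cancellations: tangential continuity of $\bm u$ removes the interface term in the curl computation, while continuity of $\nabla\times\bm u$ removes it in the double-curl computation.
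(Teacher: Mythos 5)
Your proof is correct and follows essentially the same route as the paper's: integration by parts on each subdomain, with the tangential-continuity hypothesis cancelling the interface term for the weak curl and the curl-continuity hypothesis cancelling it for the weak curl-curl. The only difference is organizational — you carry out the two verifications as separate steps (and add a welcome remark on the trace spaces in which the interface identities hold), whereas the paper chains both integrations by parts into a single computation starting from $\int \bm u\cdot(\nabla\times)^2\bm\phi$.
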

\begin{proof}
For any function $\bm \phi \in \big(C_0^\infty(\text{K}_1 \cup \text{K}_2 \cup \Lambda)\big)^2$,
\begin{equation*}
\begin{split}
&\int_{\text{K}_1 \cup \text{K}_2 \cup \Lambda} \bm u\cdot (\nabla\times)^2 \bm \phi \d \bm x\\
= &\int_{\text{K}_1} \nabla\times\bm u_1 \nabla\times \bm \phi \d \bm x+ \int_{\text{K}_2} \nabla\times\bm u_2 \nabla\times \bm \phi \d \bm x + \int_\Lambda ( \bm u_1 \times \bm n_1+ \bm u_2 \times \bm n_2)\nabla\times\bm \phi\d s\\
=&\int_{\text{K}_1} \nabla\times\nabla\times\bm u_1\cdot \bm \phi \d\bm x + \int_{\text{K}_2} \nabla\times\nabla\times\bm u_2\cdot \bm \phi \d \bm x -\int_\Lambda(\bm \phi\times\bm n_1)\nabla\times\bm u_1+(\bm \phi \times\bm n_2 )\nabla\times\bm u_2\d s \\
+& \int_\Lambda ( \bm u_1 \times \bm n_1+ \bm u_2 \times \bm n_2)\nabla\times\bm \phi\d s,
\end{split}
\end{equation*}
where $\bm u_i = \bm u|_{K_i}, \;i = 1,\;2.$
The assumption $\bm u_1 \times \bm n_1 = -\bm u_2 \times \bm n_2$\ and $\nabla\times \bm u_1=\nabla\times \bm u_2$ on $\Lambda$ leads to
\begin{align*}
&\int_{\text{K}_1 \cup \text{K}_2 \cup \Lambda} \bm u\cdot (\nabla\times)^2 \bm \phi \d \bm x=\int_{\text{K}_1 \cup \text{K}_2 \cup \Lambda} w\nabla\times \bm \phi \d \bm x,\\
&\int_{\text{K}_1 \cup \text{K}_2 \cup \Lambda} \bm u\cdot (\nabla\times)^2 \bm \phi \d \bm x=\int_{\text{K}_1 \cup \text{K}_2 \cup \Lambda} \bm v\cdot\bm \phi \d \bm x,
\end{align*}
where $w|_{K_i}=\nabla\times( \bm u|_{K_i})$ and $\bm v|_{K_i}=(\nabla\times)^2 ( \bm u|_{K_i})$, $i=1,\;2$ are  the weak curl and the weak curl-curl of $\bm u$.
Thus, we complete the proof.
\end{proof}
\subsection{$H^2$(\text{curl})-conforming elements }
\begin{definition}\label{def} For any integer $k \geq 3$, the ${H}^2(\mathrm{curl})$-conforming element is defined by the triple:
\begin{equation*}
\begin{split}
&\hat{K}=(-1,1)^2,\\
&P_{\hat{K}} = Q_{k-1,k}\times Q_{k,k-1},\\
&\Sigma_{\hat{K}} = \bm M_{\hat{p}}(\hat {\bm u}) \cup \bm M_{\hat{e}}(\hat {\bm u}) \cup \bm M_{\hat{K}}(\hat {\bm u}),
\end{split}
\end{equation*}
where $\bm M_{\hat{p}}(\hat {\bm u})$, $\bm M_{\hat{e}}(\hat {\bm u})$ and $\bm M_{\hat{K}}(\hat {\bm u})$ are the DOFs defined as follows:\\
$\bm M_{\hat{p}}(\hat {\bm u})$ is the set of DOFs given on all vertex nodes and edge nodes $\hat{p}_{i}$:
\begin{equation}\label{def2}
\bm M_{\hat{p}}(\hat {\bm u})=\left\{(\hat{\nabla}\times \hat{\bm u})(\hat p_{i}),\; i=1,\;2,\cdots,4(k-1) \right\},
\end{equation}
where we choose the points $\hat p_{i}$ at : $4$ vertex nodes and $(k-2)$ distinct nodes on each edge.\\
$\bm M_{\hat{e}}(\hat {\bm u})$ is the set of DOFs given on all edges $\hat{e}_i$ of $\hat{K}$, each with the unit  tangential vector $\hat{\bm \tau}_i$:
\begin{equation}\label{def3}
\bm M_{\hat{e}}(\hat {\bm u})= \left\{\displaystyle{\int}_{\hat{e}_i}\hat {\bm u}\cdot \hat{\bm \tau}_i \hat{q}\mathrm d\hat{s},\ \forall \hat{q}\in P_{k-1}(\hat{e}_i), i=1,2,3,4 \right\}.
\end{equation}
 $M_{\hat{K}}(\hat {\bm u})$ is the set of DOFs given in the element $\hat{K}$:
\begin{align}\label{def5}
&\bm M_{\hat{K}}(\hat {\bm u})=\left\{\int_{\hat{K}} \hat{\bm u}\cdot \hat{\bm q}\mathrm d \hat V,\ \forall \hat{\bm q}\in  Q_{k-2}(\hat K)\cdot \hat{\bm x}\ \text{and}\ \hat {\bm q}=\hat{\nabla}\times\hat{\varphi},\ \forall \hat{\varphi}\in  \tilde{Q}_{k-3}(\hat K) \right\},
\end{align}
where $\hat{\bm x}=(\hat x_1,\;\hat x_2)^T$ and $\tilde{Q}_{k-3}$ represents the space of polynomials in $Q_{k-3}$ without constant term 0.
\end{definition}
Now we have $4(k-1)$ node DOFs, $4k$ edge DOFs and $(k-1)^2+(k-2)^2-1$ element DOFs. After calculation, we can obtain
\[\dim(P_{\hat{K}})= 4(k-1) + 4k + (k-1)^2+(k-2)^2-1 = 2k(k+1).\]
Since $k\geq 3$, the minimum number of DOFs  is 24.
\begin{lemma}\label{wel-def-01}
	The DOFs \eqref{def2}-\eqref{def5} are well-defined for any $\hat{\bm u}\in \bm H^{1/2+\delta}(\hat{K})$ and $\hat{\nabla}\times\hat{\bm u}\in H^{1+\delta}(\hat{K})$, with $\delta>0$.
\end{lemma}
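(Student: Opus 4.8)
The plan is to verify each of the three families of functionals separately, showing that under the stated regularity each is a finite (indeed bounded) linear functional. The only real content lies in identifying, for each type of DOF, which trace or embedding theorem supplies the required pointwise or boundary values, and in checking that the regularity thresholds $1/2+\delta$ and $1+\delta$ sit on the correct side of the critical exponents in two dimensions.

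First I would treat the nodal DOFs in \eqref{def2}. These are point evaluations of $\hat{\nabla}\times\hat{\bm u}$, so I need $\hat{\nabla}\times\hat{\bm u}$ to possess well-defined point values. Since $\hat{K}\subset\mathbb{R}^2$ and $\hat{\nabla}\times\hat{\bm u}\in H^{1+\delta}(\hat{K})$ with $1+\delta>1=n/2$, the Sobolev embedding $H^{1+\delta}(\hat{K})\hookrightarrow C(\overline{\hat{K}})$ holds. Thus $\hat{\nabla}\times\hat{\bm u}$ is continuous up to the closed square, and its values at the $4(k-1)$ vertex and edge nodes $\hat{p}_i$ are well-defined and bounded by $\|\hat{\nabla}\times\hat{\bm u}\|_{H^{1+\delta}(\hat{K})}$.

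Next I would treat the edge DOFs in \eqref{def3}. These pair the tangential component $\hat{\bm u}\cdot\hat{\bm\tau}_i$ against polynomials on each edge, so I need the tangential trace of $\hat{\bm u}$ to exist in $L^2(\hat{e}_i)$. Because $\hat{\bm u}\in\bm H^{1/2+\delta}(\hat{K})$ with $1/2+\delta>1/2$, the trace theorem on the Lipschitz square yields a bounded trace operator into $H^{\delta}(\partial\hat{K})$; restricted to each edge this gives $\hat{\bm u}\cdot\hat{\bm\tau}_i\in H^{\delta}(\hat{e}_i)\subset L^2(\hat{e}_i)$. Since $\hat{q}\in P_{k-1}(\hat{e}_i)$ is bounded on $\hat{e}_i$, the Cauchy--Schwarz inequality on the edge shows $\int_{\hat{e}_i}\hat{\bm u}\cdot\hat{\bm\tau}_i\,\hat{q}\,\mathrm{d}\hat{s}$ is finite and controlled by $\|\hat{\bm u}\|_{\bm H^{1/2+\delta}(\hat{K})}$. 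The volume DOFs in \eqref{def5} are the simplest: since $\bm H^{1/2+\delta}(\hat{K})\subset\bm L^2(\hat{K})$ and every admissible $\hat{\bm q}$ is a polynomial, hence in $\bm L^2(\hat{K})$, the integral $\int_{\hat{K}}\hat{\bm u}\cdot\hat{\bm q}\,\mathrm{d}\hat{V}$ is finite by Cauchy--Schwarz with no further regularity needed.

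The main obstacle, such as it is, is the edge trace step: one must be careful that the trace theorem is invoked on a Lipschitz (polygonal) domain rather than a smooth one, and that restricting the boundary trace to an individual edge is legitimate. I would address this by appealing to the standard trace results for Sobolev spaces $H^s$ with $1/2<s<3/2$ on Lipschitz polygons, under which the trace on each edge is well-defined in $H^{s-1/2}$. The hypothesis $\delta>0$ is precisely what keeps $s=1/2+\delta$ strictly above the trace threshold $1/2$ and $1+\delta$ strictly above the embedding threshold $1$, so that both the nodal and edge functionals remain bounded; at the borderline values $\delta=0$ neither point evaluation of the curl nor the edge trace would be admissible.
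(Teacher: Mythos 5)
Your proposal is correct and follows essentially the same route as the paper: the Sobolev embedding $H^{1+\delta}(\hat K)\hookrightarrow C(\overline{\hat K})$ for the nodal evaluations of $\hat\nabla\times\hat{\bm u}$, the trace $\hat{\bm u}|_{\partial\hat K}\in \bm H^{\delta}(\partial\hat K)$ plus Cauchy--Schwarz for the edge moments, and Cauchy--Schwarz in $\bm L^2(\hat K)$ for the interior moments. Your added care about the trace theorem on a Lipschitz polygon and the sharpness of the thresholds at $\delta=0$ is a useful elaboration but not a different argument.
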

\begin{proof}
	By the embedding theorem, we have $\hat{\nabla}\times\hat{ \bm u} \in H^{1+\delta}(\hat{K})\subset C^{0,\delta}(\hat{K}),$ then the DOFs in $M_{\hat{p}}$ are well-defined.
	It follows Cauchy-Schwarz inequality  that the DOFs defined in $M_{\hat e} (\hat{\bm u})$ and $M_{\hat{K}}(\hat{\bm u})$ are well-defined since $\hat{\bm u} \in \bm H^{1/2+\delta}(\hat{K})$ and $\hat{\bm u}|_{\partial \hat K} \in \bm H^{\delta}(\partial \hat K)$.
\end{proof}
\begin{theorem}
The finite element given by Definition \ref{def} is unisolvent and conforming in $H^2(\mathrm{curl})$.
\end{theorem}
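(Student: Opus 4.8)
The statement splits into unisolvence and conformity, and I would establish them in that order. For unisolvence, the dimension count recorded just before the theorem gives $\#\Sigma_{\hat K}=\dim P_{\hat K}=2k(k+1)$, so it suffices to show that any $\hat{\bm u}\in P_{\hat K}$ annihilating all the DOFs must vanish. Two preliminary observations drive everything. First, since $\hat u_1\in Q_{k-1,k}$ and $\hat u_2\in Q_{k,k-1}$, one derivative drops a degree in each variable and $\hat w:=\hat\nabla\times\hat{\bm u}\in Q_{k-1}$. Second, the edge DOFs \eqref{def3} force the tangential trace to vanish on every edge: on each $\hat e_i$ the trace $\hat{\bm u}\cdot\hat{\bm\tau}_i$ is a polynomial of degree $\le k-1$ in the edge variable whose moments against all of $P_{k-1}(\hat e_i)$ are prescribed to be zero.

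The core of the unisolvence proof is to show $\hat w\equiv 0$. The node DOFs \eqref{def2} make $\hat w$ vanish at the $k$ nodes of each edge (two vertices plus $k-2$ interior points); as $\hat w|_{\hat e_i}$ has degree $\le k-1$, it vanishes on $\partial\hat K$, whence $\hat w=b\,r$ with $b:=(\hat x_1^2-1)(\hat x_2^2-1)$ and $r\in Q_{k-3}$. Integrating the curl-type interior DOFs in \eqref{def5} by parts gives $\int_{\hat K}\hat{\bm u}\cdot(\hat\nabla\times\hat\varphi)\,\d\hat V=\int_{\hat K}\hat w\,\hat\varphi\,\d\hat V+\int_{\partial\hat K}(\hat{\bm u}\cdot\hat{\bm\tau})\hat\varphi\,\d\hat s$; the boundary term drops out by the vanishing tangential trace, so $\int_{\hat K}\hat w\,\hat\varphi=0$ for all $\hat\varphi\in\tilde{Q}_{k-3}$, and the missing constant mode follows from Green's theorem together with the same trace condition. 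Taking $\hat\varphi=r$ and using $b>0$ in the interior forces $r=0$, hence $\hat w=0$.

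With $\hat\nabla\times\hat{\bm u}=0$ on the simply connected square I would write $\hat{\bm u}=\hat\nabla p$; matching degrees gives $p\in Q_k$, and the vanishing tangential trace makes $p$ constant on $\partial\hat K$, so after normalization $p=b\,s$ with $s\in Q_{k-2}$. It remains to exploit the radial interior DOFs in \eqref{def5}, i.e. $\int_{\hat K}(\hat{\bm x}\cdot\hat\nabla p)\,q\,\d\hat V=0$ for all $q\in Q_{k-2}$. Choosing $q=s$ and integrating by parts (using $p=0$ on $\partial\hat K$ and $\hat\nabla\cdot\hat{\bm x}=2$) I expect to collapse the left-hand side to a sign-definite weighted integral, concretely $\int_{\hat K}(\hat x_1^2\hat x_2^2-1)\,s^2\,\d\hat V=0$; since $\hat x_1^2\hat x_2^2-1<0$ in the interior, this yields $s=0$ and hence $\hat{\bm u}=0$. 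Producing this identity and checking its sign is the step I anticipate to be the main obstacle, since it is precisely where the two interior test families $Q_{k-2}\cdot\hat{\bm x}$ and $\hat\nabla\times\tilde{Q}_{k-3}$ must interlock correctly with the bubble factor $b$.

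Conformity I would deduce directly from Lemma \ref{prob2}, which requires only continuity of $\bm u\times\bm n$ and of $\nabla\times\bm u$ across interelement edges. On a shared edge the tangential trace is a degree $\le k-1$ polynomial in the edge variable, completely fixed by the common edge DOFs \eqref{def3}; likewise $\hat w|_{\hat e}$ is a degree $\le k-1$ polynomial completely fixed by its values at the $k$ common node DOFs \eqref{def2}. Both quantities are therefore single-valued across the edge, so two adjacent elements sharing these DOFs agree on $\bm u\times\bm n$ and on $\nabla\times\bm u$, and Lemma \ref{prob2} delivers $H^2(\tc)$-conformity.
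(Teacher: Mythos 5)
Your proof is correct and follows the paper's strategy essentially step for step: conformity from the edge and node DOFs pinning down the tangential trace and the curl on each edge, and unisolvence by first killing $\hat{\nabla}\times\hat{\bm u}=b\,r$ through the curl-type interior DOFs and then writing $\hat{\bm u}=\hat{\nabla}(b\,s)$ and killing $s$ through the radial interior DOFs. The one place you diverge is the final step, where the paper chooses $\hat{\bm q}=q\hat{\bm x}$ with $\hat{\nabla}\cdot\hat{\bm q}=\hat{\varphi}$ (solvable by Euler's identity on homogeneous components, a point the paper leaves implicit) while you test directly with $q=s$ and integrate by parts; your anticipated identity is in fact exact, since $\tfrac12\,\hat{\bm x}\cdot\hat{\nabla}b-b=\hat{x}_1^2\hat{x}_2^2-1$, so $\int_{\hat K}(\hat{\bm x}\cdot\hat{\nabla}p)\,s\,\d\hat V=\int_{\hat K}(\hat{x}_1^2\hat{x}_2^2-1)\,s^2\,\d\hat V$ and the sign-definiteness you need does hold.
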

\begin{proof}
(\romannumeral 1). To prove the $H^2$(curl) conformity, it suffices to prove $\hat{\bm u} \times \hat{\bm n}_i = 0$ and $\hat{\nabla}\times \hat{\bm u}=0$ on an edge (e.g. $\hat{x}_1=-1$)  when all DOFs associated this edge vanish.  Without loss of generality, we consider the edge $\hat{x}_1=-1$. On this edge, $$\hat{\bm u}\cdot \hat{\bm \tau}_i = \hat{u}_2 \in P_{k-1}(\hat x_2).$$
By choosing $q=\hat{\bm u}\cdot \hat{\bm \tau}_i$ in \eqref{def3}, we obtain $\hat{\bm u}\cdot \hat{\bm \tau}_i=\hat{\bm u}\times \hat{\bm n}_i=0$.
Furthermore, we get $\hat{\nabla}\times \hat{\bm u}=0$ by using the $k$ vanishing DOFs defined in \eqref{def2}.

(\romannumeral 2). Now, we consider the unisolvence. It is clear that the total number of DOFs is $2k(k+1)=\dim P_{\hat K}$.
We only need to prove that vanishing all DOFs for $\hat{\bm u }\in P_{\hat{K}}$ yields $\hat{\bm u}=0$. By virtue of the fact that $\hat{\nabla}\times \hat{\bm u} = 0$ on $\partial \hat K$, we can rewritten $\hat{\nabla}\times \hat{\bm u}$ as:
\[\hat{\nabla}\times \hat{\bm u} = (1-\hat{x}_1)(1+\hat{x}_1)(1-\hat{x}_2)(1+\hat{x}_2)\hat{v},\ \hat{v}\in Q_{k-3}(\hat K).\]
Then, by using the integration by parts, we can get
\begin{equation*}
\int_{\hat{K}}\hat{\nabla}\times \hat{\bm u}\hat{ v} \d\hat{V}=\int_{\hat{K}}\hat{\bm u}\hat{\nabla}\times \hat{ v} \d\hat{V}+\int_{\partial \hat{K}}\hat{ v }\hat{\bm n}\times\hat{\bm u}\d s.
\end{equation*}
Due to \eqref{def3} and \eqref{def5}, we arrive at $\hat{\nabla}\times \hat{\bm u} = 0\ \text{in}\ \hat{K}$.
Thus, there exists a function $\hat{\phi} \in Q_{k}(\hat K)$  such that $\hat{\bm u}=\nabla\hat{\phi}$. According to (\romannumeral 1), we have $\nabla\hat{\phi}\cdot\hat{\bm {\tau}}=\nabla\hat{\phi}\times\hat{\bm n} =0$ on $\partial \hat K$, which implies that $\hat{\phi}$ can be chosen as
\[\hat{\phi}= (1-\hat{x}_1)(1+\hat{x}_1)(1-\hat{x}_2)(1+\hat{x}_2)\hat{\varphi},\ \ \hat{\varphi} \in Q_{k-2}(\hat K).\]
By applying the integration by parts again, we have
 \begin{equation*}
\int_{\hat{K}} \hat{\bm u}\hat{\bm q} \d\hat{V}=\int_{\hat{K}}\nabla\hat{\phi}\hat{\bm q} \d\hat{V}=\int_{\hat{K}}\hat{\phi}\nabla\cdot\hat{\bm q} \d \hat{V},\ \ \forall \hat{ \bm q}\in Q_{k-2}\cdot\hat{\bm x}.
\end{equation*}
Choosing $\nabla\cdot\hat{\bm q} = \hat{\varphi}$ and then using \eqref{def5}, we obtain $\hat{\varphi}=0$, i.e. $\hat{\bm u}=0$, which completes the proof.
\end{proof}

We need to extend the curl-curl-conforming element to a general parallelogram $K$. This is done by relating the finite element function on $K$ to a function on the $\hat K$. Since the elements are vectorial and we wish to relate the curl of $\bm u$ in an easy way to the curl of $\hat{ \bm u}$, we adopt the following transformation:
\begin{align}
\bm u \circ F_K = B_K^{-T} \hat{\bm u},\label{trans-1-1}
\end{align}
where the affine mapping
\begin{align}\label{trans-K}
F_K(\bm x)= B_K\hat{\bm x}+ \bm b_K.
\end{align}
By a simple computation, we have
 \begin{align}
 &(\nabla\times\bm u) \circ F_K = \frac{1}{\det(B_K)} \hat{\nabla}\times\hat{ \bm u},\label{trans-1-2}\\[0.2cm]
 &(\nabla\times\nabla\times\bm u) \circ F_K =\frac{B_K}{(\det(B_K))^2}\hat{\nabla}\times\hat{\nabla}\times\hat{ \bm u}.\label{trans-1-3}
 \end{align}
The unit tangential vector $\bm \tau$ along the edge $e$ of $K$ is achieved by the transformation \cite{Li2013Time}
 \begin{equation}\label{trans-3}
\bm \tau \circ F_K = \frac{B_K \hat{\bm \tau}}{|B_K \hat{\bm \tau}|}.
\end{equation}

Next we need to relate the DOFs on $K$ and $\hat K$ and show that they are invariant under the transformation \eqref{trans-1-1}.
\begin{lemma}\label{lemm-2}
Suppose that the function $\bm u$ and the unit tangential vector $\bm \tau$ to $\partial K$ are defined by the transformations \eqref{trans-1-1} and \eqref{trans-3}. Suppose also that the DOFs of a function $\bm u$ on $K$ are given by
\begin{align}
\bm M_{{p}}({\bm u})&=\Big\{\det(B_K)({\nabla}\times {\bm u})(p_{i}),  i=1,\;2,\;\cdots,\;4(k-1) \Big\},\label{Ldef2}\\
\bm M_{{e}}( {\bm u})&= \left\{\int_{e_i} {\bm u}\cdot {\bm \tau}_i {q}\d{s},\ \forall {q}\in P_{k-1}({e}_i), \;i=1,2,3,4 \right\}\label{Ldef3},\\
\bm M_{{K}}(\bm { u})&=\left\{\int_{{K}} {\bm u}\cdot{\bm q}\d V,\ \forall \bm q \circ F_K = \frac{B_K\hat{\bm q}}{\det(B_K)},\;\hat{\bm q}\in  Q_{k-2}(\hat K)\cdot \hat{\bm x}\right.\nonumber\\
&\quad\quad\quad\quad\left.\text{and} \ {\bm q}=\nabla\times{\varphi},\ \forall {\varphi}=\hat{\varphi}\circ F_K^{-1},\;\hat{\varphi}\in  \tilde{Q}_{k-3}( \hat K) \right\}.\label{Ldef5}
\end{align}
Then the DOFs for $\hat{\bm u}$ on $\hat K$ and for $\bm u$ on $K$ are identical.
\end{lemma}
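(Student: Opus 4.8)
The plan is to check each of the three groups of DOFs separately and to show in every case that the $K$-functional evaluated on $\bm u$ equals the corresponding $\hat K$-functional evaluated on $\hat{\bm u}$, where $\bm u$ and $\hat{\bm u}$ are tied together by \eqref{trans-1-1}. For the nodal DOFs \eqref{Ldef2} this is immediate: evaluating \eqref{trans-1-2} at $p_i=F_K(\hat p_i)$ gives $(\nabla\times\bm u)(p_i)=\frac{1}{\det(B_K)}(\hat\nabla\times\hat{\bm u})(\hat p_i)$, so multiplying by $\det(B_K)$ cancels the Jacobian factor and reproduces \eqref{def2} exactly.

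For the edge DOFs \eqref{Ldef3} the key is the transformation of the tangential trace. Using \eqref{trans-1-1} and \eqref{trans-3} and the cancellation $B_K^{-1}B_K=I$, I would compute $(\bm u\cdot\bm\tau_i)\circ F_K=(B_K^{-T}\hat{\bm u})\cdot\frac{B_K\hat{\bm\tau}_i}{|B_K\hat{\bm\tau}_i|}=\frac{\hat{\bm u}\cdot\hat{\bm\tau}_i}{|B_K\hat{\bm\tau}_i|}$. Since the affine parametrization of the edge yields the arclength relation $\d s=|B_K\hat{\bm\tau}_i|\,\d\hat s$, and since the test functions correspond through $q\circ F_K=\hat q$ (the affine edge map preserves polynomial degree, so this is a bijection of $P_{k-1}(e_i)$ onto $P_{k-1}(\hat e_i)$), the factor $|B_K\hat{\bm\tau}_i|$ cancels and the $K$-integral reduces to the $\hat K$-integral in \eqref{def3}.

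For the interior DOFs \eqref{Ldef5} I would handle the two families of test fields $\bm q$ together. For the first family the covariant law $\bm q\circ F_K=\frac{B_K\hat{\bm q}}{\det(B_K)}$ is imposed directly, whence $(\bm u\cdot\bm q)\circ F_K=\frac{1}{\det(B_K)}\hat{\bm u}\cdot\hat{\bm q}$; combining this with the volume relation $\d V=\det(B_K)\,\d\hat V$ recovers the first part of \eqref{def5}. The main obstacle lies in the second family, where $\bm q=\nabla\times\varphi$ with $\varphi=\hat\varphi\circ F_K^{-1}$: one must verify that this $\bm q$ obeys the \emph{same} covariant law with $\hat{\bm q}=\hat\nabla\times\hat\varphi$, so that it is subsumed by the first computation. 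Writing the planar scalar-to-vector curl as $\nabla\times\varphi=R\nabla\varphi$, with $R$ the $90^\circ$ rotation matrix, and applying the chain rule in the form $(\nabla\varphi)\circ F_K=B_K^{-T}\hat\nabla\hat\varphi$, this compatibility reduces to the purely algebraic identity $\det(B_K)\,R\,B_K^{-T}=B_K\,R$. This identity holds for every nonsingular $2\times2$ matrix by the adjugate formula $\det(B)B^{-1}=\operatorname{adj}(B)$, and once it is in place the second family transforms exactly like the first, so that \eqref{def5} is reproduced and the proof is complete.

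The step I expect to be the crux is precisely this last matrix identity: it encodes the hidden consistency that makes the interior DOF set $\bm M_K$ behave uniformly under the Piola-type map, and without it the two constituent families of \eqref{Ldef5} would transform with different factors. Everything else is bookkeeping with the transformation rules \eqref{trans-1-1}--\eqref{trans-3} already established.
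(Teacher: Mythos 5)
The paper states this lemma without proof, so there is no in-text argument to compare against; your proposal supplies a complete and correct one. All three computations are the standard ones: the nodal DOFs match because of \eqref{trans-1-2}; the edge DOFs match because the factor $|B_K\hat{\bm\tau}_i|$ from the tangential trace cancels against the arclength Jacobian, with $q\circ F_K=\hat q$ giving a degree-preserving bijection of the test spaces; and the interior DOFs match because the contravariant map \eqref{trans-1-1} paired with the Piola law $\bm q\circ F_K=B_K\hat{\bm q}/\det(B_K)$ makes $(\bm u\cdot\bm q)\circ F_K\,\det(B_K)=\hat{\bm u}\cdot\hat{\bm q}$, which absorbs the volume Jacobian. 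You correctly isolate the only nontrivial point, namely that the second family $\bm q=\nabla\times\varphi$ with $\varphi=\hat\varphi\circ F_K^{-1}$ obeys the same Piola law with $\hat{\bm q}=\hat\nabla\times\hat\varphi$; writing $\nabla\times\varphi=R\nabla\varphi$ with $R$ the quarter-turn rotation and using $(\nabla\varphi)\circ F_K=B_K^{-T}\hat\nabla\hat\varphi$, the needed identity $\det(B_K)\,R\,B_K^{-T}=B_K R$ is easily verified for any invertible $2\times2$ matrix (both sides equal $\left(\begin{smallmatrix}-b&a\\-d&c\end{smallmatrix}\right)$ for $B_K=\left(\begin{smallmatrix}a&b\\c&d\end{smallmatrix}\right)$), which is exactly the statement that the scalar-to-vector curl intertwines the gradient (covariant) and divergence (Piola) transformations. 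The only cosmetic caveat is that $\mathrm{d}V=|\det(B_K)|\,\mathrm{d}\hat V$, so one should either assume $\det(B_K)>0$ (orientation-preserving $F_K$, as is customary) or carry the sign through; this does not affect the conclusion.
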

\begin{remark}
Note that the DOFs in  $\bm M_{{p}}({\bm u})$  involve $\det(B_k)$ and  $\det(B_k)$ varies from element to element when the mesh is nonuniform.  In the computation, we need to transfer $\det(B_k)$ from  $\bm M_{{p}}({\bm u})$ to the basis functions.
\end{remark}
\subsection{The $H^2$(curl) interpolation and its error estimates}

Provided $\bm u \in \bm H^{1/2+\delta}(K)$, and $ \nabla \times \bm u \in H^{1+\delta}(K)$ with $\delta >0$ (see Lemma \ref{wel-def-01}),  we can
define an $H^2$(curl) interpolation operator on $K$ denoted as $\Pi_K$ by
 \begin{eqnarray}\label{def-inte-01}
M_p(\bm u-\Pi_K\bm u)=0,\ M_e(\bm u-\Pi_K\bm u)=0\ \text{and}\ M_K(\bm u-\Pi_K\bm u)=0,
\end{eqnarray}
where $M_p,\ M_e$ and $M_K$ are the sets of DOFs in \eqref{Ldef2}-\eqref{Ldef5}.

Before we prove the error estimates of the interpolation, we need to show some important lemmas.
Firstly, we shall show the relationship between the interpolation on a general element $K$ and the interpolation on the reference element $\hat{K}$.

\begin{lemma}\label{hatpi}
	Assume that $\Pi_K\bm u$ is well-defined. Then under the transformation \eqref{trans-1-1},
	we have
	$$\widehat{\Pi_K \bm u} = \Pi_{\hat{K}} \hat{\bm u}.$$
\end{lemma}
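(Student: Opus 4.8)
The plan is to prove this by unisolvence: I will show that both $\widehat{\Pi_K \bm u}$ and $\Pi_{\hat{K}} \hat{\bm u}$ lie in $P_{\hat{K}}$ and that they carry identical degrees of freedom on $\hat{K}$, so that the unisolvence established in the theorem following Definition \ref{def} forces them to coincide. Here $\widehat{\Pi_K \bm u}$ denotes the pullback $B_K^T (\Pi_K \bm u)\circ F_K$ of the interpolant, consistent with the convention $\hat{\bm u} = B_K^T(\bm u\circ F_K)$ coming from \eqref{trans-1-1}.

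First I would check that $\widehat{\Pi_K\bm u}\in P_{\hat K}$. By construction $\Pi_K\bm u$ lies in the local space $P_K$ obtained by pushing $P_{\hat K}=Q_{k-1,k}\times Q_{k,k-1}$ forward through the transformation \eqref{trans-1-1}; since \eqref{trans-1-1} is a linear bijection between $P_{\hat K}$ and $P_K$, its inverse returns $\widehat{\Pi_K\bm u}$ to $P_{\hat K}$. At the same time $\Pi_{\hat K}\hat{\bm u}\in P_{\hat K}$ by definition, so both candidates live in the same finite-dimensional space.

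The core of the argument is a four-fold chain of equalities on the degrees of freedom. Write $N(\cdot)$ for an arbitrary DOF in $\bm M_{\hat p}\cup\bm M_{\hat e}\cup\bm M_{\hat K}$ on $\hat K$ and $N_K(\cdot)$ for the matching DOF in $\bm M_p\cup\bm M_e\cup\bm M_K$ from \eqref{Ldef2}--\eqref{Ldef5}. Lemma \ref{lemm-2} asserts precisely that each paired DOF is invariant under \eqref{trans-1-1}, that is $N(\hat{\bm v})=N_K(\bm v)$ for every admissible $\bm v$ and its pullback $\hat{\bm v}$. Applying this with $\bm v=\Pi_K\bm u$ gives $N(\widehat{\Pi_K\bm u})=N_K(\Pi_K\bm u)$; the defining relation \eqref{def-inte-01} of $\Pi_K$ gives $N_K(\Pi_K\bm u)=N_K(\bm u)$; Lemma \ref{lemm-2} applied to $\bm u$ itself gives $N_K(\bm u)=N(\hat{\bm u})$; and the definition of $\Pi_{\hat K}$ gives $N(\hat{\bm u})=N(\Pi_{\hat K}\hat{\bm u})$. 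Chaining these four identities yields $N(\widehat{\Pi_K\bm u})=N(\Pi_{\hat K}\hat{\bm u})$ for every DOF $N$ on $\hat K$.

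Since all degrees of freedom agree and both functions lie in $P_{\hat K}$, unisolvence forces $\widehat{\Pi_K\bm u}=\Pi_{\hat K}\hat{\bm u}$. I expect the only delicate point to be ensuring that the regularity assumed on $\bm u$ survives the affine pullback, so that $\hat{\bm u}$ is admissible for $\Pi_{\hat K}$ and every DOF appearing in the chain is well-defined; this follows from Lemma \ref{wel-def-01} combined with the smoothness of $F_K$ in \eqref{trans-K}. The substantive content, namely the DOF invariance, has already been absorbed into Lemma \ref{lemm-2}, so the remaining work is essentially the bookkeeping of the commuting diagram.
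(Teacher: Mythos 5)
Your proposal is correct and follows essentially the same route as the paper: both arguments rest on the DOF invariance of Lemma \ref{lemm-2}, the defining relations \eqref{def-inte-01} of $\Pi_K$, and unisolvence on $\hat K$ (the paper merely packages this by showing all DOFs of $\hat{\bm u}-\widehat{\Pi_K\bm u}$ vanish, applying $\Pi_{\hat K}$, and invoking $\Pi_{\hat K}(\widehat{\Pi_K\bm u})=\widehat{\Pi_K\bm u}$, which is your chain of equalities in a slightly different order). No gap.
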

\begin{proof}
	Because of Lemma \ref{lemm-2} and the definition of the interpolation \eqref{def-inte-01}, we have
	\begin{eqnarray*}\label{def-inte-02}
		M_{\hat{p}}(\hat{\bm u}-\widehat{\Pi_K\bm u})=0,\ M_{\hat{e}}(\hat{\bm u}-\widehat{\Pi_K\bm u})=0\ \text{and}\ M_{\hat{K}}(\hat{\bm u}-\widehat{\Pi_K\bm u})=0.
	\end{eqnarray*}
	Then based on the unisolvence  of the DOFs, we obtain
	\begin{eqnarray}\label{def-inte-03}
	\Pi_{\hat{K}}(\hat{\bm u}-\widehat{\Pi_K\bm u})=0.
	\end{eqnarray}
	Furthermore, we have $\Pi_{\hat{K}}(\widehat{\Pi_K\bm u})=\widehat{\Pi_K\bm u}$, which, together with the equation \eqref{def-inte-03}, leads to the conclusion.
\end{proof}
\begin{lemma}\label{lem2.5}
Provided that $\bm u$ is sufficiently smooth,  we have
	\begin{align}\label{pi_I}
	\left\|\nabla\times\Pi_K\bm u-I_K\nabla \times\bm u\right\|\leq \left\|\nabla\times\bm u-I_K\nabla\times\bm u\right\|,
	\end{align}
	where $I_K$ is the Lagrange interpolation operator from $H^{1+\delta}(K)$ to $Q_{k-1}(K)$ using the same vertex and edge nodes as the $H^2(\tc)$-conforming elements.
\end{lemma}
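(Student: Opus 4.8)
The plan is to pass to the reference element and there exhibit $\nabla\times\Pi_K\bm u$ as a constrained $L^2$-projection of $\nabla\times\bm u$, so that a Pythagorean identity delivers the bound with constant one. First I would reduce to $\hat K$. By Lemma \ref{hatpi} we have $\widehat{\Pi_K\bm u}=\Pi_{\hat K}\hat{\bm u}$, and by \eqref{trans-1-2} the curl transforms by the scalar factor $1/\det(B_K)$; the same factor relates $\chi:=\nabla\times\bm u$ to $\hat\chi:=\hat\nabla\times\hat{\bm u}$, and, after checking that $I_K$ is compatible with the pullback on the shared vertex and edge nodes, relates $I_K\chi$ to $I_{\hat K}\hat\chi$. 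Since both sides of the claimed inequality are $L^2$ norms, the Jacobian factors cancel and it suffices to prove $\|\hat w - I_{\hat K}\hat\chi\|_{\hat K}\le\|\hat\chi - I_{\hat K}\hat\chi\|_{\hat K}$, where $\hat w:=\hat\nabla\times\Pi_{\hat K}\hat{\bm u}$.

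Next I would establish that the error is an interior \emph{bubble}. Because $P_{\hat K}=Q_{k-1,k}\times Q_{k,k-1}$, a direct differentiation shows $\hat w\in Q_{k-1}$, the same space as $I_{\hat K}\hat\chi$. On any edge both functions are polynomials of degree $k-1$ in the edge variable, and the node degrees of freedom \eqref{def2} force them to agree at the $k$ nodes lying on that edge; hence $\hat w=I_{\hat K}\hat\chi$ on $\partial\hat K$, and $\rho:=\hat w-I_{\hat K}\hat\chi$ vanishes on $\partial\hat K$, i.e. $\rho\in(1-\hat x_1^2)(1-\hat x_2^2)\,Q_{k-3}$.

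Then I would extract orthogonality from the interior moments. Integrating the interior degrees of freedom \eqref{def5} built on $\hat{\bm q}=\hat\nabla\times\hat\varphi$ by parts and using the edge degrees of freedom \eqref{def3} to cancel the boundary contributions converts the matching of these quantities into the $L^2$-moment conditions $\int_{\hat K}(\hat w-\hat\chi)\hat\varphi\,\d\hat V=0$; together with the constant moment coming from \eqref{def3} with $\hat q\equiv1$ (Stokes' theorem), this gives such a relation for every $\hat\varphi\in Q_{k-3}$. The goal is to promote these relations to the statement that $\hat w$ is the $L^2(\hat K)$-projection of $\hat\chi$ onto the affine manifold $\{q\in Q_{k-1}:q=I_{\hat K}\hat\chi\text{ on }\partial\hat K\}$, so that $\hat\chi-\hat w$ is $L^2$-orthogonal to the whole bubble space containing $\rho$. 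Granting this, since $I_{\hat K}\hat\chi$ lies in the same manifold, the Pythagorean identity
\[
\|\hat\chi-I_{\hat K}\hat\chi\|_{\hat K}^2=\|\hat\chi-\hat w\|_{\hat K}^2+\|\hat w-I_{\hat K}\hat\chi\|_{\hat K}^2
\]
immediately yields $\|\rho\|_{\hat K}\le\|\hat\chi-I_{\hat K}\hat\chi\|_{\hat K}$, which is the claim.

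The hard part will be the orthogonality step. The interior degrees of freedom control moments of $\hat w-\hat\chi$ only against $Q_{k-3}$, whereas the bubble space $(1-\hat x_1^2)(1-\hat x_2^2)\,Q_{k-3}$ against which orthogonality is needed has degree up to $k-1$; the two spaces have the same dimension but do not coincide. Closing this gap is the delicate point: one must either upgrade the $Q_{k-3}$-orthogonality to orthogonality against the full bubble space (exploiting that $\hat\chi-I_{\hat K}\hat\chi$ vanishes at all interpolation nodes), or replace the naive Pythagorean step by a weighted argument, testing the moment identity against $\rho$'s potential $\sigma\in Q_{k-3}$ and using $0\le(1-\hat x_1^2)(1-\hat x_2^2)\le1$ to bound $\|\rho\|_{\hat K}^2$. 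I would expect the precise constant to stand or fall on exactly how much orthogonality the interior moments can be made to furnish.
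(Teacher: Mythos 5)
Your outline retraces the paper's own proof through every substantive step: reduce to $\hat K$ via Lemma \ref{hatpi} (the Jacobian factors indeed cancel, since $I_K$ is a scalar nodal interpolation and commutes with the pullback), use the nodal DOFs \eqref{def2} to show that $\rho:=\hat\nabla\times\Pi_{\hat K}\hat{\bm u}-I_{\hat K}(\hat\nabla\times\hat{\bm u})\in Q_{k-1}(\hat K)$ vanishes at the $k$ nodes of each edge and hence equals $bv$ with $b=(1-\hat x_1^2)(1-\hat x_2^2)$ and $v\in Q_{k-3}$, and integrate the interior moments \eqref{def5} by parts against the edge moments \eqref{def3} to get $\int_{\hat K}(\hat\nabla\times\Pi_{\hat K}\hat{\bm u}-\hat\nabla\times\hat{\bm u})\hat\varphi\,\d\hat V=0$ for all $\hat\varphi\in Q_{k-3}$. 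The difficulty you flag at the end, however, is a genuine gap, and as written your argument is not a proof: the Pythagorean identity needs $\hat\nabla\times\hat{\bm u}-\hat\nabla\times\Pi_{\hat K}\hat{\bm u}$ to be orthogonal to the bubble space $bQ_{k-3}$, whereas the DOFs only give orthogonality to $Q_{k-3}$, and these two $(k-2)^2$-dimensional spaces are genuinely different (constants lie in the latter but not the former). You should know that the paper's proof founders at exactly this spot: it writes $\|\rho\|_{\hat K}^2=\int_{\hat K}b\,(\hat\nabla\times\Pi_{\hat K}\hat{\bm u}-I_{\hat K}\hat\nabla\times\hat{\bm u})\,v\,\d\hat V$, splits the middle factor, and then replaces $\int_{\hat K}b\,(\hat\nabla\times\Pi_{\hat K}\hat{\bm u}-\hat\nabla\times\hat{\bm u})\,v\,\d\hat V$ by $C\int_{\hat K}(\hat\nabla\times\Pi_{\hat K}\hat{\bm u}-\hat\nabla\times\hat{\bm u})\,v\,\d\hat V$ so that the available $Q_{k-3}$-orthogonality applies; pulling the non-constant weight $b$ out of a sign-indefinite integral is not a legitimate inequality, so the constant $1$ in \eqref{pi_I} is not actually established there either.

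The "weighted argument" you gesture at does close the gap, at the price of a generic constant. On $\hat K$ the pairing $(bv,\varphi)\mapsto\int_{\hat K}bv\,\varphi\,\d\hat V$ between $bQ_{k-3}$ and $Q_{k-3}$ is nondegenerate (take $\varphi=v$ and use $b>0$ in the interior), so by finite dimensionality $\|bv\|_{\hat K}\le C\,\sup_{0\ne\varphi\in Q_{k-3}}\bigl(\int_{\hat K}bv\,\varphi\,\d\hat V\bigr)/\|\varphi\|_{\hat K}$ with $C=C(k)$. Applying this to $\rho=bv$, using the $Q_{k-3}$-orthogonality to replace $\hat\nabla\times\Pi_{\hat K}\hat{\bm u}$ by $\hat\nabla\times\hat{\bm u}$ in the numerator, and then Cauchy--Schwarz, gives $\|\rho\|_{\hat K}\le C\|\hat\nabla\times\hat{\bm u}-I_{\hat K}\hat\nabla\times\hat{\bm u}\|_{\hat K}$, i.e.\ \eqref{pi_I} with a constant $C$ in place of $1$ --- which is all that Theorem \ref{err-interp} and the subsequent convergence analysis require. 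If you insist on the constant $1$, you would need the stronger orthogonality against $bQ_{k-3}$ that you correctly identify as missing; neither your outline nor the paper supplies it.
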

\begin{proof}
	Without loss of generality, we only prove \eqref{pi_I} for the reference element $ \hat K$. And for the sake of brevity, we shall drop the hat notation. Applying the definition of ${\Pi}_{ K}$, we can get the value of $\nabla\times\Pi_K\bm u$ at the vertex and edge nodes $p_i\;(i=1,2,\cdots,n_p)$ as
	\[(\nabla\times\Pi_K\bm u)(p_i)=(\nabla\times\bm u)(p_i).\]
	Using the definition of Lagrange interpolation, we obtain
	\[(I_K\nabla\times\bm u)(p_i)=(\nabla\times\bm u)(p_i).\]
	It follows that  $\nabla\times\Pi_K\bm u-I_K\nabla\times\bm u=0$ on $\partial K$. Thus,
	\[\nabla\times\Pi_K\bm u-I_K\nabla\times\bm u=(1-x_1)(x_1+1)(1-x_2)(x_2+1)v,\quad v\in Q_{k-3},\]
	which, together with the  triangle inequality, leads to
    \begin{align*}
	& \left\|\nabla\times\Pi_K\bm u-I_K\nabla\times\bm u\right\|_K^2\\
	=&\int_K(1-x)(x+1)(1-y)(y+1)(\nabla\times\Pi_K\bm u-I_K\nabla\times\bm u)v\d V\\
	\leq &C\int_K(\nabla\times\Pi_K\bm u-\nabla\times\bm u)v\d V+\int_K(\nabla\times\bm u-I_K\nabla\times\bm u)(\nabla\times\Pi_K\bm u-I_K\nabla\times\bm u)\d V.
	\end{align*}
	Using the integration by parts, we have, in light of \eqref{Ldef3} and \eqref{Ldef5},
	\begin{align*}
	&\int_K(\nabla\times\Pi_K\bm u-\nabla\times\bm u)v \d V=\int_K(\Pi_K\bm u-\bm u)\nabla\times v\d V+\int_{\partial K}\bm n\times(\Pi_K\bm u-\bm u) v\d s=0.
	\end{align*}
	From Cauchy-Schwarz inequality, we derive
	\begin{align*}
	&\int_K(\nabla\times\bm u-I_K\nabla\times\bm u)(\nabla\times\Pi_K\bm u-I_K\nabla\times\bm u)\d V\\
	\leq& \left\|\nabla\times\bm u-I_K\nabla\times\bm u\right\|_K\left\| \nabla\times\Pi_K\bm u-I_K\nabla\times\bm u\right\|_K.
	\end{align*}
 Collecting the above three equations gives \eqref{pi_I}.
\end{proof}
\begin{lemma}\cite{Li2013Time}\label{hatv-v}
	Suppose that $\bm v $ and $\hat{\bm v}$ are related by the transformation \eqref{trans-1-1}. Then  for any $s\geq 0$, we have
	\begin{align*}
	&\left|\hat{\bm v}\right|_{s,\hat K}\leq C\left|\det(B_K)\right|^{-\frac{1}{2}}\left\|B_K\right\|^{s+1}\left\|\bm v\right\|_{s,K},\\
	&\big|\hat{\nabla}\times\hat{\bm v}\big|_{s,\hat K}\leq C\left|\det(B_K)\right|^{\frac{1}{2}}\left\|B_K\right\|^{s-1}\left\| \nabla\times \bm v\right\|_{s,K}.
	\end{align*}
\end{lemma}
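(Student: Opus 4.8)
The plan is to reduce both inequalities to two elementary features of the affine map $F_K$ in \eqref{trans-K}. First, since $F_K$ is affine its Jacobian is the constant matrix $B_K$, so the chain rule reads $\partial_{\hat x_j}=\sum_i (B_K)_{ij}\,\partial_{x_i}$; each differentiation in the reference variables therefore produces exactly one factor of $B_K$ and no higher-order terms, so an order-$s$ derivative on $\hat K$ costs at most $\|B_K\|^{s}$ by submultiplicativity of the matrix norm. Second, the change of variables $\bm x=F_K(\hat{\bm x})$ gives $\d V=|\det(B_K)|\,\d\hat V$, so passing from an $L^2(\hat K)$ integral to an $L^2(K)$ integral costs one factor $|\det(B_K)|^{-1}$, i.e.\ $|\det(B_K)|^{-1/2}$ in the $L^2$ norm. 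I would invert \eqref{trans-1-1} to the pointwise identity $\hat{\bm v}=B_K^{T}(\bm v\circ F_K)$ and work from there.

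For the first inequality I would differentiate $\hat{\bm v}=B_K^{T}(\bm v\circ F_K)$. The constant prefactor $B_K^{T}$ contributes one factor $\|B_K\|$; the $s$ derivatives falling on $\bm v\circ F_K$ contribute $\|B_K\|^{s}$ together with an order-$s$ derivative of $\bm v$ evaluated at $F_K(\hat{\bm x})$. Squaring, summing over the multi-indices of length $s$, and applying the change of variables then inserts $|\det(B_K)|^{-1/2}$, yielding $|\hat{\bm v}|_{s,\hat K}\le C|\det(B_K)|^{-1/2}\|B_K\|^{s+1}|\bm v|_{s,K}$, which is the stated bound once the seminorm on the right is relaxed to the full norm $\|\bm v\|_{s,K}$.

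For the second inequality I would start instead from \eqref{trans-1-2}, which inverts to the scalar identity $\hat\nabla\times\hat{\bm v}=\det(B_K)\,\big((\nabla\times\bm v)\circ F_K\big)$; in two dimensions the curl of a covariant field transforms as a density, so here the prefactor is the scalar $\det(B_K)$ rather than the matrix $B_K^{T}$. The same machinery---$\|B_K\|^{s}$ from the $s$ derivatives, $|\det(B_K)|^{-1/2}$ from the change of variables, and $|\det(B_K)|$ from the scalar prefactor---gives $|\hat\nabla\times\hat{\bm v}|_{s,\hat K}\le C|\det(B_K)|^{1/2}\|B_K\|^{s}\,\|\nabla\times\bm v\|_{s,K}$. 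The step I expect to require the most care is exactly this bookkeeping of the powers of $\|B_K\|$ and $|\det(B_K)|$ through the multi-index chain rule; the affineness of $F_K$ is what keeps it clean, since it kills every higher derivative of the map and guarantees that an order-$s$ seminorm on $\hat K$ is controlled by the order-$s$ seminorm on $K$ alone. I would in particular re-check the exponent of $\|B_K\|$ in the second estimate against this count, because at $s=0$ the identity $\hat\nabla\times\hat{\bm v}=\det(B_K)\,(\nabla\times\bm v)\circ F_K$ forces the exact scaling $|\hat\nabla\times\hat{\bm v}|_{0,\hat K}=|\det(B_K)|^{1/2}\|\nabla\times\bm v\|_{0,K}$, with no power of $\|B_K\|$ at all.
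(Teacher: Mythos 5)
The paper gives no proof of this lemma at all; it is imported by citation from \cite{Li2013Time}, so there is nothing internal to compare your argument against. Your scaling argument is the standard proof and is correct: inverting \eqref{trans-1-1} to $\hat{\bm v}=B_K^{T}(\bm v\circ F_K)$, the affine chain rule contributes $\|B_K\|^{s}$ for an order-$s$ derivative, the constant prefactor $B_K^{T}$ one further factor $\|B_K\|$, and the change of variables $\d V=|\det(B_K)|\,\d\hat V$ the factor $|\det(B_K)|^{-1/2}$, which together give the first estimate (with the seminorm $|\bm v|_{s,K}$ on the right, then relaxed to the full norm). Your remark on the second estimate is also well taken: in two dimensions \eqref{trans-1-2} inverts to the scalar identity $\hat{\nabla}\times\hat{\bm v}=\det(B_K)\,\big((\nabla\times\bm v)\circ F_K\big)$ with no matrix prefactor, so the same bookkeeping yields the exponent $\|B_K\|^{s}$ rather than $\|B_K\|^{s-1}$, and at $s=0$ the bound is an exact equality with no power of $\|B_K\|$. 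The exponent $s-1$ as stated is the three-dimensional form, where $\hat{\nabla}\times\hat{\bm v}=\det(B_K)\,B_K^{-1}\big((\nabla\times\bm v)\circ F_K\big)$ and the extra $B_K^{-1}$ supplies the missing $\|B_K\|^{-1}$. In 2D the stated inequality is therefore not sharp but still valid, since $\|B_K\|\leq C$ lets one trade $\|B_K\|^{s}$ for $C\|B_K\|^{s-1}$; your sharper exponent only improves the power of $h$ contributed by the curl term in the proof of Theorem \ref{err-interp}, and the final interpolation estimates are unaffected.
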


\begin{theorem}\label{err-interp}
	If $\bm u$, $\nabla\times\bm u\in \bm H^s(K)$, $1+\delta\leq s\leq k$ with $\delta>0$, then we have the following error estimates for interpolation $\Pi_K$,
	\begin{align}
	&\left\|\bm u-\Pi_K\bm u\right\|_K+\left\|\nabla\times(\bm u-\Pi_K\bm u)\right\|_K\leq Ch^s(\left\|\bm u\right\|_{s,K}+\left\|\nabla\times\bm u\right\|_{s,K}),\label{inter-u}\\
	&	\left\|(\nabla\times)^2(\bm u-\Pi_K\bm u)\right\|_K\leq Ch^{s-1}\left\|\nabla\times\bm u\right\|_{s,K},
	\end{align}
	\begin{proof}
		We only prove the results for integer $s$ to avoid the technical complications.  We divide our proof in three steps.\\
		(i). We apply the transformation \eqref{trans-1-1} and Lemma \ref{hatpi} to derive
		\begin{align*}
		&\left\|\bm u-\Pi_K\bm u\right\|_K\\
		=&\left(\int_{\hat{K}}\left| B_K^{-T}(\hat{\bm u}-\widehat{\Pi_K\bm u})\right|^2\left|\det(B_K)\right|\d \hat V\right)^{\frac{1}{2}}\\
		\leq &\left|\det(B_K)\right|^{\frac{1}{2}}\left\|B_K^{-1}\right\|
		\left\|\hat{\bm u}-{\Pi_{\hat K}\hat{\bm u}}\right\|_{\hat K}.
		\end{align*}
	    Noting the fact that $\Pi_{\hat K}\hat {\bm p}=\hat{\bm p}$ when $\hat{\bm p}\in Q_{k-1}(\hat K)$, we obtain, with the help of Lemma \ref{wel-def-01} and Theorem 5.5 in \cite{Monk2003},
		\begin{align*}
		&\left\|\hat{\bm u}-{\Pi_{\hat K}\hat{\bm u}}\right\|_{\hat K}
		=\left\|\left(I-\Pi_{\hat K}\right)(\hat{\bm u}+\hat{\bm p})\right\|_{\hat K}\\
		\leq & \inf_{\hat {\bm p}}C\left(\left\|\hat{\bm u}+\hat{\bm p}\right\|_{s,\hat K}+\left\|\nabla\times(\hat{\bm u}+\hat{\bm p})\right\|_{s,\hat K}\right)\\
		\leq &C\left(\left|\hat{\bm u}\right|_{s,\hat K}+\left|\nabla\times\hat{\bm u}\right|_{s,\hat K}\right).
		\end{align*}
       Collecting the above two equations and using Lemma \ref{hatv-v}   leads to
       \[\left\|\bm u-\Pi_K\bm u\right\|_K\leq Ch^{s}(\left\|\bm u \right\|_{s,K}+\left\| \nabla\times \bm u\right\|_{s,K}).\]
       (ii). We use triangle inequality and Lemma \ref{lem2.5} to have
		\begin{align*}
		&\left\|\nabla\times(\bm u-\Pi_K\bm u)\right\|_K\\
		\leq &\left\|\left(I-I_K\right)\nabla \times\bm u\right\|_K+\left\|I_K\nabla\times\bm u-\nabla\times\Pi_K\bm u\right\|_K\\
		\leq &2\left\|\left(I-I_K\right)\nabla \times\bm u\right\|_K,
		\end{align*}
		which, together with the error estimate of Lagrange interpolation, leads to
		 \[\left\|\nabla\times(\bm u-\Pi_K\bm u)\right\|_K\leq Ch^{s}\left\| \nabla\times \bm u\right\|_{s,K}.\]
		 (iii). Applying the triangle inequality and noting the fact that $\left\|\nabla\times\varphi\right\|=\left|\varphi\right|_1$, we arrive at
			\begin{align*}
		&\left\|(\nabla\times)^2(\bm u-\Pi_K\bm u)\right\|_K\\
		\leq&\left\|(\nabla\times)^2\bm u -\nabla\times I_K\nabla\times\bm u\right\|_K+\left\|\nabla\times I_K\nabla\times\bm u-(\nabla\times)^2\Pi_K\bm u)\right\|_K\\
		\leq &\left|\nabla\times\bm u - I_K\nabla\times\bm u\right|_{1,K}+Ch^{-1}\left\| I_K\nabla\times\bm u-\nabla\times\Pi_K\bm u)\right\|_K,
		\end{align*}
	where we have used $\left\|\nabla\times I_K\nabla\times\bm u-(\nabla\times)^2\Pi_K\bm u)\right\|_K\leq Ch^{-1}\left\| I_K\nabla\times\bm u-\nabla\times\Pi_K\bm u)\right\|_K$. In fact,
	\begin{align*}
	&\left\|\nabla\times I_K\nabla\times\bm u-(\nabla\times)^2\Pi_K\bm u)\right\|_K^2\\
	\leq&\frac{\left\|B_K\right\|^2}{\det(B_K)}\int_{\hat K}\left(\hat\nabla\times\left(\left({{I}_{K} \nabla \times {\bm u}}-{\nabla\times\Pi_{ K}{\bm u}}\right)\circ F_K\right)\right)^2\d \hat V\\
	\leq &C\frac{\left\|B_K\right\|^2}{\det(B_K)}\int_{\hat K}\left(\left({{I}_{K} \nabla \times {\bm u}}-{\nabla\times\Pi_{ K}{\bm u}}\right)\circ F_K\right)^2\d \hat V\\
	\leq &C\frac{\left\|B_K\right\|^2}{\det(B_K)^2}\int_{ K}\left({{I}_{K} \nabla \times {\bm u}}-{\nabla\times\Pi_{ K}{\bm u}}\right)^2\d  V\\
	\leq & Ch^{-2}\left\|{{I}_{K} \nabla \times {\bm u}}-{\nabla\times\Pi_{ K}{\bm u}}\right\|_K^2.
	\end{align*}
   Combining the above two equations and applying Lemma \ref{lem2.5}, we acquire
		\[\left\|(\nabla\times)^2(\bm u-\Pi_K\bm u)\right\|_K
	\leq  Ch^{s-1}\left\|\nabla\times\bm u\right\|_{s,K}.\]
	\end{proof}
\end{theorem}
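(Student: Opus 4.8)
The plan is to reduce all three estimates to polynomial-approximation results on the fixed reference square $\hat K$ and then transfer them back through the scaling bounds of Lemma \ref{hatv-v}. For the zeroth-order term $\|\bm u-\Pi_K\bm u\|_K$ I would first pull back via the transformation \eqref{trans-1-1}: a change of variables gives $\|\bm u-\Pi_K\bm u\|_K\le|\det(B_K)|^{1/2}\|B_K^{-1}\|\,\|\hat{\bm u}-\widehat{\Pi_K\bm u}\|_{\hat K}$, and Lemma \ref{hatpi} replaces $\widehat{\Pi_K\bm u}$ by $\Pi_{\hat K}\hat{\bm u}$. On $\hat K$ the operator $I-\Pi_{\hat K}$ annihilates every $\hat{\bm p}\in Q_{k-1}(\hat K)\subset P_{\hat K}$, so a Bramble--Hilbert/Deny--Lions argument (the abstract estimate of Theorem 5.5 in \cite{Monk2003}, whose hypotheses are exactly the well-definedness granted by Lemma \ref{wel-def-01}) yields $\|\hat{\bm u}-\Pi_{\hat K}\hat{\bm u}\|_{\hat K}\le C(|\hat{\bm u}|_{s,\hat K}+|\hat\nabla\times\hat{\bm u}|_{s,\hat K})$. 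Inserting the two inequalities of Lemma \ref{hatv-v} and collecting the matrix factors as $\|B_K^{-1}\|\,\|B_K\|^{s+1}\sim h^{s}$ produces the claimed bound $Ch^{s}(\|\bm u\|_{s,K}+\|\nabla\times\bm u\|_{s,K})$.

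For the first-order term $\|\nabla\times(\bm u-\Pi_K\bm u)\|_K$ the key is to route through the scalar Lagrange interpolant $I_K$ of $\nabla\times\bm u$. Writing $\nabla\times(\bm u-\Pi_K\bm u)=(I-I_K)\nabla\times\bm u+(I_K\nabla\times\bm u-\nabla\times\Pi_K\bm u)$ and invoking Lemma \ref{lem2.5}, which bounds the second piece by the first in $L^2$, collapses the estimate to $2\|(I-I_K)\nabla\times\bm u\|_K$. The standard Lagrange interpolation error estimate on $Q_{k-1}$ then gives $Ch^{s}\|\nabla\times\bm u\|_{s,K}$, completing the first displayed inequality.

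The second-order term is where I expect the real work, since $(\nabla\times)^2\Pi_K\bm u$ is not directly pinned down by any single DOF set. I would again insert $\nabla\times I_K\nabla\times\bm u$ and split $\|(\nabla\times)^2(\bm u-\Pi_K\bm u)\|_K$ into $\|(\nabla\times)^2\bm u-\nabla\times I_K\nabla\times\bm u\|_K$ plus $\|\nabla\times(I_K\nabla\times\bm u-\nabla\times\Pi_K\bm u)\|_K$. The first summand equals $|\nabla\times\bm u-I_K\nabla\times\bm u|_{1,K}$, handled by the $H^1$-seminorm Lagrange estimate and costing one order, i.e. $Ch^{s-1}\|\nabla\times\bm u\|_{s,K}$. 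The delicate summand is the second: because both $I_K\nabla\times\bm u$ and $\nabla\times\Pi_K\bm u$ lie in $Q_{k-1}(K)$, their difference is a polynomial, so after pulling back to $\hat K$ I can apply an inverse inequality; tracking the scaled change-of-variables factor $\|B_K\|^2/\det(B_K)^2\sim h^{-2}$ under the $L^2$ norm gives $\|\nabla\times(I_K\nabla\times\bm u-\nabla\times\Pi_K\bm u)\|_K\le Ch^{-1}\|I_K\nabla\times\bm u-\nabla\times\Pi_K\bm u\|_K$. Feeding this back into Lemma \ref{lem2.5} converts the $L^2$ difference into $\|(I-I_K)\nabla\times\bm u\|_K\le Ch^{s}\|\nabla\times\bm u\|_{s,K}$, and the $h^{-1}$ absorbs one power to yield $Ch^{s-1}\|\nabla\times\bm u\|_{s,K}$. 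The main obstacle throughout is the consistent bookkeeping of the $B_K$-powers, so that the inverse inequality and the Lagrange estimates combine to \emph{exactly} $h^{s-1}$ without gaining or losing a power of $h$; restricting to integer $s$ (the fractional range following by operator interpolation) keeps this manageable.
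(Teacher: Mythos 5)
Your proposal is correct and follows essentially the same route as the paper's own proof: the same pull-back to $\hat K$ with Lemma \ref{hatpi} and the Bramble--Hilbert argument for the $L^2$ term, the same splitting through the Lagrange interpolant $I_K$ combined with Lemma \ref{lem2.5} for the curl term, and the same inverse-inequality scaling argument for the curl-curl term. No substantive differences to report.
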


\begin{remark}
	 The equation (5.42) in \cite{Monk2003}, which is for the  $H(\tc)$ interpolation, has the same form with our estimate \eqref{inter-u}. However, \eqref{inter-u} is for the $H^2(\tc)$ interpolation, hence, we proved it again.
\end{remark}

\section{$H^2$(\text{curl})-conforming elements on triangles}\label{trielem}
We also construct an $H^2(\tc)$-conforming finite element on a triangle. Proceeding as Section \ref{rectelem}, we can prove the same theoretical results. Thus, in this section, we only introduce the definition of the finite elements. To this end, we need to define a special space of polynomial  $\mathcal{R}_k$:
$$\mathcal{R}_k=(P_{k-1})^2\oplus{\Phi}_k,\,\ \Phi_k=\{\bm p \in (\widetilde{P}_k)^2:\bm x \cdot\bm p =0\},$$
where $\widetilde{P}_k$ is the space of homogeneous polynomial of degree $k$.
\begin{definition}\label{2def2} For any integer $k \geq 4$, the ${H}^2$(\text{curl})-conforming element is defined by the triple:
\begin{equation*}
\begin{split}
&\hat{K}\;\text{is the reference triangle},\\
&P_{\hat{K}} = \mathcal{R}_k,\\
&\Sigma_{\hat{K}} = \bm M_{\hat{p}}(\hat {\bm u}) \cup \bm M_{\hat{e}}(\hat {\bm u}) \cup \bm M_{\hat{K}}(\hat {\bm u}),
\end{split}
\end{equation*}
where $\bm M_{\hat{p}}(\hat {\bm u})$ is the set of DOFs given on all vertex nodes and edge nodes $\hat{p}_{i}$:
\begin{equation}\label{2def2}
\bm M_{\hat{p}}(\hat {\bm u})=\left\{(\hat{\nabla}\times \hat{\bm u})(\hat{p_{i}}), i=1,\;2,\;\cdots\;,3(k-1) \right\},
\end{equation}
with the points $p_{i}$ chosen at : $3$ vertex nodes and $(k-2)$ distinct nodes in each edge,\\
$\bm M_{\hat{e}}(\hat {\bm u})$ is the set of DOFs given on all edges $\hat{e}_i$ of $\hat{K}$, each with the unit  tangential vector $\hat{\bm \tau}_i$:
\begin{equation}\label{2def3}
\bm M_{\hat{e}}(\hat {\bm u})= \left\{\int_{\hat{e}_i}\hat {\bm u}\cdot \hat{\bm \tau}_i \hat{q}\mathrm d\hat{s},\ \forall \hat{q}\in P_{k-1}(\hat{e}_i),\;i=1,2,3 \right\},
\end{equation}
and $M_{\hat{K}}(\hat {\bm u})$ is the set of DOFs given on the element $\hat{K}$:
\begin{align}\label{2def4}
\bm M_{\hat{K}}(\hat {\bm u})=\left\{\int_{\hat{K}} \hat{\bm u}\cdot \hat{\bm q} \mathrm d \hat V,\ \forall \hat{\bm q}\in  \left(P_{k-5}\right)^2\oplus\widetilde{P}_{k-5}\hat{\bm x}\oplus\widetilde{P}_{k-4}\hat{\bm x}\oplus
\widetilde{P}_{k-3}\hat{\bm x}\right\}.
\end{align}
\end{definition}
The total number of DOFs is
		\[\dim(P_{\hat{K}})= 2 \dim (P_{k-1})+\dim \Phi_k = k(k+1)+k=k(k+2).\]
		Since $k\geq 4$, the minimum number of DOFs  is 24.
\begin{lemma}\label{trilemm-2}
		Suppose that the function $\bm u$ and the unit  tangential vector $\bm \tau$ to $\partial K$ are defined by the transformations \eqref{trans-1-1} and \eqref{trans-3}. Suppose also that the DOFs of a function $\bm u$ on $K$ are given by
	\begin{align}
	\bm M_{{p}}({\bm u})=&\Big\{\det(B_K)({\nabla}\times {\bm u})(p_{i}),\;  i=1,\;2,\cdots,4(k-1) \Big\},\label{triLdef2}\\
	\bm M_{{e}}( {\bm u})=& \left\{\int_{e_i} {\bm u}\cdot{\bm \tau}_i {q}\d{s},\ \forall {q}\in P_{k-1}({e}_i), i=1,2,3,4 \right\}\label{triLdef3},\\
	\bm M_{{K}}(\bm { u})=&\left\{\int_{{K}} {\bm u}\cdot{\bm q}\d V,\ \forall \bm q \circ F_K = \frac{B_K\hat{\bm q}}{\det(B_K)},\;\hat{\bm q}\in  \left(P_{k-5}\right)^2\oplus\widetilde{P}_{k-5}\hat{\bm x}\oplus\widetilde{P}_{k-4}\hat{\bm x}\oplus
	\widetilde{P}_{k-3}\hat{\bm x}\right\}.\label{triLdef4}
	\end{align}
	Then the DOFs for $\hat{\bm u}$ on $\hat K$ and for $\bm u$ on $K$ are identical.
\end{lemma}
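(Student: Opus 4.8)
The plan is to verify the three families of functionals separately, exactly mirroring the argument behind Lemma \ref{lemm-2} for the rectangle, and in each case reduce the quantity associated with $K$ to the corresponding quantity on $\hat K$ via the transformation rules \eqref{trans-1-1}, \eqref{trans-1-2}, \eqref{trans-3} together with the appropriate change-of-variables formula. Observe first that the interior test functions in \eqref{triLdef4} are \emph{defined} through their preimages on $\hat K$ (by $\bm q\circ F_K = B_K\hat{\bm q}/\det(B_K)$), so there is nothing to check about the correspondence of the test spaces; the content of the lemma is purely the invariance of the three functionals under \eqref{trans-1-1}.

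For the pointwise degrees of freedom I would take the interpolation nodes on $K$ to be the images $p_i = F_K(\hat p_i)$ of the reference nodes under the affine map \eqref{trans-K}. Then \eqref{trans-1-2} gives
\[
(\nabla\times\bm u)(p_i) = (\nabla\times\bm u)(F_K(\hat p_i)) = \frac{1}{\det(B_K)}(\hat\nabla\times\hat{\bm u})(\hat p_i),
\]
so multiplying by $\det(B_K)$ as prescribed in \eqref{triLdef2} recovers exactly $(\hat\nabla\times\hat{\bm u})(\hat p_i)$, the functional in $\bm M_{\hat p}(\hat{\bm u})$. This is precisely why the factor $\det(B_K)$ is built into the definition of $\bm M_p(\bm u)$.

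For the edge degrees of freedom, combining \eqref{trans-1-1} and \eqref{trans-3} yields, at points of an edge,
\[
(\bm u\cdot\bm\tau_i)\circ F_K = (B_K^{-T}\hat{\bm u})\cdot\frac{B_K\hat{\bm\tau}_i}{|B_K\hat{\bm\tau}_i|} = \frac{\hat{\bm u}\cdot\hat{\bm\tau}_i}{|B_K\hat{\bm\tau}_i|},
\]
using $B_K^{-1}B_K = I$. Since $F_K$ is affine, the arc-length element transforms as $\d s = |B_K\hat{\bm\tau}_i|\,\d\hat s$, so the two factors of $|B_K\hat{\bm\tau}_i|$ cancel and, taking $q\circ F_K = \hat q$, one gets $\int_{e_i}\bm u\cdot\bm\tau_i\,q\,\d s = \int_{\hat e_i}\hat{\bm u}\cdot\hat{\bm\tau}_i\,\hat q\,\d\hat s$. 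Affinity of $F_K$ also gives $q\in P_{k-1}(e_i)$ if and only if $\hat q\in P_{k-1}(\hat e_i)$, so the edge functionals \eqref{triLdef3} and \eqref{2def3} agree.

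Finally, for the interior degrees of freedom I would use $\bm q\circ F_K = B_K\hat{\bm q}/\det(B_K)$ from \eqref{triLdef4} together with \eqref{trans-1-1} to compute the pointwise pairing
\[
(\bm u\cdot\bm q)\circ F_K = (B_K^{-T}\hat{\bm u})\cdot\frac{B_K\hat{\bm q}}{\det(B_K)} = \frac{\hat{\bm u}\cdot\hat{\bm q}}{\det(B_K)},
\]
and then the volume change of variables $\d V = |\det(B_K)|\,\d\hat V$, which produces $\int_K\bm u\cdot\bm q\,\d V = \int_{\hat K}\hat{\bm u}\cdot\hat{\bm q}\,\d\hat V$ provided $\det(B_K)>0$. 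Everything else is direct substitution, so there is no substantial obstacle beyond correctly tracking the Jacobian factors; the only points requiring care are the orientation of $F_K$ and the edge arc-length cancellation. Both are resolved by fixing $F_K$ to be orientation-preserving, so that $\det(B_K)=|\det(B_K)|$, and by the tangential scaling \eqref{trans-3}. This is exactly why the signed factor $\det(B_K)$, rather than $|\det(B_K)|$, appears throughout the definition of the transformed functionals.
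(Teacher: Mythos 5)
Your argument is correct: the three families of functionals are verified by direct substitution of the covariant transformation \eqref{trans-1-1} together with \eqref{trans-1-2} for the nodal values, the arc-length/tangent cancellation for the edge moments, and the Piola-type scaling $\bm q\circ F_K=B_K\hat{\bm q}/\det(B_K)$ with $\d V=|\det(B_K)|\,\d\hat V$ for the interior moments, and you correctly flag the only delicate points (orientation of $F_K$ and the placement of the $\det(B_K)$ factor in $\bm M_p$). There is nothing to compare against, because the paper states both this lemma and its rectangular counterpart (Lemma \ref{lemm-2}) without proof, remarking only that one proceeds as in Section \ref{rectelem}; your computation is the standard verification that fills this gap. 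The only cosmetic issue is that the index ranges in the statement ($4(k-1)$ nodes, edges $i=1,2,3,4$) are copy-paste artifacts from the rectangle and should read $3(k-1)$ and $i=1,2,3$ for the triangle, but this does not affect your argument.
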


\section{Applications}\label{app}
In this section, we use the $H^2$(curl)-conforming finite elements developed in Section \ref{rectelem} and \ref{trielem} to solve the quad-curl problem which is introduced as: For $\bm  f\in H(\td^0;\Omega)$, find
\begin{equation}\label{prob1}
\begin{split}
(\nabla\times)^4\bm u&=\bm f,\quad \text{in}\;\Omega,\\
\nabla \cdot \bm u &= 0, \quad \text{in}\;\Omega,\\
\bm u\times\bm n&=0,\quad \text{on}\;\partial \Omega,\\
\nabla \times \bm u&=0,\quad \text{on}\;\partial \Omega,
\end{split}
\end{equation}
where $\Omega \in\mathbb{R}^2$ is   Lipschitz domain and  $\bm n$ is the unit outward normal vector to $\partial \Omega$.
For the sake of satisfying divergence-free condition, we adopt mixed methods where the constraint $\nabla\cdot\bm u=0$ in \eqref{prob1} is satisfied in a weak sense by introducing an auxiliary unknown $p$ and employing a mixed  variational formulation:
Find $(\bm u; p)\in H_0^2(\tc;\Omega)\times H^1_0(\Omega)$,  s.t.
\begin{equation}\label{prob22}
\begin{split}
a(\bm u,\bm v) + b(\bm v,p)&=(\bm f, \bm v),\quad \forall \bm v\in H^2_0(\tc;\Omega),\\
b(\bm u,q)&=0,\quad \hspace{0.5cm}\forall q\in H^1_0(\Omega),
\end{split}
\end{equation}
where
\begin{align*}
&a(\bm u,\bm v)=((\nabla\times)^2 \bm u, (\nabla\times)^2 \bm v), \\
&b(\bm v,p)=(\bm v,\nabla p ).
\end{align*}
The well-posedness of the variational problem can be found in \cite{quadcurlWG}. Due to $\bm  f\in H(\td^0;\Omega)$, $p=0$.

Let \,$\mathcal{T}_h\,$ be a partition of the domain $\Omega$
consisting of rectangles or triangles. For every element $K \in
\mathcal{T}_h$, we denote by $h_K$ its diameter.  And we denote by $h$ the mesh size of $\mathcal {T}_h$. We define
\begin{eqnarray*}
	&&  V_h=\{\bm{v}_h\in H^2(\text{curl};\Omega):\ \bm v_h|_K\in Q_{{k-1},k}\times Q_{k,{k-1}}\;\text{or}\;\mathcal{R}_k,\ \forall K\in\mathcal{T}_h\}.\\
	&&   V^0_h=\{\bm{v}_h\in V_h,\ \bm{n} \times \bm{v}_h=0\ \text{and}\ \nabla\times  \bm{v}_h = 0 \ \text {on} \ \partial\Omega\},\\
	&&  S_h=\{{w}_h\in H^1(\Omega):\  w_h|_K\in Q_{k}\;\text{or}\;P_k\},\\
	&&  S^0_h=\{{w}_h\in W_h,\;{w}_h|_{\partial\Omega}=0\}.
\end{eqnarray*}

The $H^2$(curl)-conforming finite element method seeks $(\bm u_h;p_h)\in V^0_h\times S^0_h$,  s.t.
\begin{equation}\label{prob3}
\begin{split}
a(\bm u_h,\bm v_h) + b(\bm v_h,p_h)&=(\bm f, \bm v_h),\quad \forall \bm v_h\in V^0_h,\\
b(\bm u_h,q_h)&=0,\quad \hspace{0.7cm}\forall q_h\in S^0_h.
\end{split}
\end{equation}

\begin{lemma}
	The discrete problem  \eqref{prob3} has  a unique solution and $p_h=0$.
\end{lemma}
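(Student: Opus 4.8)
The plan is to establish well-posedness of the discrete saddle-point problem \eqref{prob3} by verifying the two classical conditions of Brezzi's theory: coercivity of $a(\cdot,\cdot)$ on the discrete kernel, and the discrete inf-sup (LBB) condition for $b(\cdot,\cdot)$. Since the system is square and finite-dimensional, existence will follow from uniqueness, so it suffices to show that the only solution of the homogeneous problem (with $\bm f=0$) is $(\bm u_h;p_h)=(0;0)$.

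First I would take $q_h=p_h$ in the second equation and $\bm v_h = \bm u_h$ in the first. The key observation is that the finite element space $V_h^0$ is conforming, so $\nabla S_h^0 \subset V_h^0$; this is where the construction of the $H^2(\tc)$-conforming element pays off, since gradients of $Q_k$ (resp. $P_k$) scalar functions lie in the local space $Q_{k-1,k}\times Q_{k,k-1}$ (resp. $\mathcal{R}_k$). Consequently I may choose $\bm v_h=\nabla p_h\in V_h^0$ in the first equation of the homogeneous system. Since $\nabla\times\nabla p_h=0$, the term $a(\nabla p_h, \bm u_h)$ vanishes, leaving $b(\nabla p_h,p_h)=(\nabla p_h,\nabla p_h)=\|\nabla p_h\|^2=0$. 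Together with the boundary condition $p_h|_{\partial\Omega}=0$, the Poincar\'e inequality forces $p_h=0$, which simultaneously handles the claim that $p_h=0$.

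Next, with $p_h=0$ the first homogeneous equation reduces to $a(\bm u_h,\bm v_h)=((\nabla\times)^2\bm u_h,(\nabla\times)^2\bm v_h)=0$ for all $\bm v_h\in V_h^0$. Taking $\bm v_h=\bm u_h$ gives $\|(\nabla\times)^2\bm u_h\|=0$, so $(\nabla\times)^2\bm u_h=0$. The second homogeneous equation gives $b(\bm u_h,q_h)=(\bm u_h,\nabla q_h)=0$ for all $q_h\in S_h^0$; combined with the boundary conditions $\bm n\times\bm u_h=0$ and $\nabla\times\bm u_h=0$ on $\partial\Omega$ (so that $\bm u_h\in X$ in the sense of \eqref{spac-02}), I would argue that $\bm u_h$ lies in the discrete analogue of $X$ on which $a(\cdot,\cdot)$ is coercive. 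The essential step is a discrete Poincar\'e--Friedrichs-type inequality: for divergence-free $\bm u_h$ satisfying the tangential and curl boundary conditions, $\|\bm u_h\|_{H^2(\tc;\Omega)}$ is controlled by $\|(\nabla\times)^2\bm u_h\|$, whence $(\nabla\times)^2\bm u_h=0$ yields $\bm u_h=0$.

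The main obstacle is justifying this last coercivity/Poincar\'e estimate on the discrete space. The continuous version follows from the regularity of the space $X$ and the well-posedness cited from \cite{quadcurlWG}, but transferring it to $V_h^0$ requires that the discrete divergence-free constraint $(\bm u_h,\nabla q_h)=0$ for all $q_h\in S_h^0$ be compatible with the continuous one; this is exactly the role of the commuting/conformity property $\nabla S_h^0\subset V_h^0$, which removes the usual gap between discrete and continuous divergence-free fields. I expect the cleanest route is to show directly that $(\nabla\times)^2\bm u_h=0$ together with the boundary conditions forces $\nabla\times\bm u_h$ to be a curl-free field with vanishing boundary trace, hence zero, so that $\bm u_h$ is itself a gradient $\nabla\phi$; the discrete divergence-free condition with $q_h=\phi$ (using $\nabla S_h^0\subset V_h^0$ once more) then forces $\bm u_h=0$. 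This mirrors part (ii) of the unisolvence argument and avoids invoking a separate discrete Poincar\'e inequality.
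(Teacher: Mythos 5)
Your proposal is correct, but it follows a genuinely different route from the paper. The paper verifies the two hypotheses of Brezzi's theorem directly: it asserts coercivity of $a(\cdot,\cdot)$ on the discrete kernel $X_h=\{\bm u_h\in V_h^0 : b(\bm u_h,q_h)=0,\ \forall q_h\in S_h^0\}$ by invoking a discrete Friedrichs inequality cited from \cite{Sun2016A}, and checks the inf-sup condition by the same substitution $\bm v_h=\nabla p_h$ that you use. You instead observe that the system is square and finite-dimensional, so it suffices to kill the homogeneous solution; you obtain $p_h=0$ exactly as the paper does (via $\nabla S_h^0\subset V_h^0$ and $\nabla\times\nabla p_h=0$), and then dispose of $\bm u_h$ by an exact-sequence argument: $(\nabla\times)^2\bm u_h=0$ plus the boundary condition forces $\nabla\times\bm u_h=0$ (it is a globally continuous piecewise polynomial with vanishing vector curl and zero trace), hence $\bm u_h=\nabla\phi_h$ on the convex domain, and the discrete divergence-free constraint with $q_h=\phi_h$ gives $\bm u_h=0$. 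What your approach buys is independence from the discrete Friedrichs inequality, whose validity for this new $H^2(\mathrm{curl})$ element is not self-evident from the reference (which concerns N\'ed\'elec spaces); what it costs is the need to justify that the local potentials (whose existence is established in the unisolvence proof, part (\romannumeral 2)) patch into a global $\phi_h\in S_h^0$ --- a discrete exact-sequence property that the paper never states explicitly, though it follows from tangential continuity and simple connectivity of $\Omega$. One small loose end: you derive $p_h=0$ only for the homogeneous system; for the actual claim you should repeat the substitution $\bm v_h=\nabla p_h$ in the inhomogeneous first equation and note that $(\bm f,\nabla p_h)=0$ because $\bm f\in H(\mathrm{div}^0;\Omega)$ and $p_h$ vanishes on $\partial\Omega$.
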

\begin{proof}
	Firstly, we define a space $$X_h=\{\bm u_h\in V^0_h \ | \ b(\bm u_h,q_h)=0,\ \  \text{for all}\ \ q_h\in S^0_h\}.$$
	By the Poincar\'e inequality, the boundary condition, and the discrete Friedrichs inequality \cite{Sun2016A}, we can deduce that $a(\cdot,\cdot)$ is coercive  on $X_h$, i.e., for $\bm u_h\in X_h$,
	\begin{equation}\label{prob5}
	\begin{split}
	a(\bm u_h,\bm u_h)&=((\nabla\times)^2 \bm u_h, (\nabla\times)^2 \bm u_h) \geq \alpha(\|\bm u_h\|+\|\nabla\times \bm u_h\|+ \|(\nabla\times)^2 \bm u_h\|),
	\end{split}
	\end{equation}
	where $\alpha$ is a positive constant.
	Here  we have used the  fact $\|(\nabla\times)^2 \bm u_h\|=|\nabla\times \bm u_h|_{1}\geq C\|\nabla\times \bm u_h\|$.
	Furthermore, we check the Babu$\check{\text{s}}$ka-Brezzi condition by taking $\bm v_h=\nabla p_h $, then
	\begin{equation}\label{prob6}
	\begin{split}
	\sup_{\bm v_h\in V^0_h}\frac{|b(\bm v_h,p_h)|}{\|\bm v_h\|_{H^2(\tc;\Omega)}}\geq\frac{|(\nabla p_h,\nabla p_h)|}{\|\nabla p_h\|_{H^2(\tc;\Omega)}}\geq\|\nabla p_h\|\geq C\| p_h\|_{H^1(\Omega)}.
	\end{split}
	\end{equation}
	The $X_h$-coercivity and Babu\v{s}ka-Brezzi condition are satisfied, then the  problem \eqref{prob3} has a unique solution.
	Moreover, by letting $\bm u_h = \nabla p_h$ in the second equation of \eqref{prob3}, we have $\|\nabla p_h\|=0.$ Combined with the boundary condition of $p_h$, we arrive at $p_h=0$.
\end{proof}

Before giving the error estimate of $(\bm u_h;p_h)$, we first define a global interpolation.
For $\bm u \in \bm H^{1/2+\delta}(\Omega)$ and $ \nabla \times \bm u \in H^{1+\delta}(\Omega)$ with $\delta >0$, the global inerpolation $\Pi_h\bm u\in V_h$ is defined element by element using
\[(\Pi_h\bm u)|_K=\Pi_K(\bm u|_K).\]
\begin{theorem}\label{conv-curlcurl}
	Assume $(\bm u; p)\in H^2_0(\mathrm{curl};\Omega)\times H^1_0(\Omega)$ is the solution of \eqref{prob22} with $p=0$ and $(\bm u_h; p_h)\in V^0_h\times S_h^0$ is the solution of \eqref{prob3} with $p_h=0$. Then, for $\bm u\in \bm H^s(\Omega)$ and $\nabla\times\bm u\in H^s(\Omega)$ $(1+\delta\leq s\leq k\;\text{with}\;\delta>0)$, we have
	\begin{equation*}
	\begin{split}
	\|\bm u-\bm u_h\|_{H^2(\mathrm{curl};\Omega)} \leq C h^{s-1}\left(\left\|\bm u\right\|_s+\left\|\nabla\times\bm u\right\|_s\right).
	\end{split}
	\end{equation*}
\end{theorem}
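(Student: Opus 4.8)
The plan is to prove the estimate in two stages: first establish that $\bm u_h$ is a quasi-optimal approximation of $\bm u$ in the $H^2(\tc)$ norm, and then bound the resulting best-approximation error by the interpolation error already controlled in Theorem~\ref{err-interp}. To begin I would record the Galerkin orthogonality. Since $p=0$ and $p_h=0$, the first equations of \eqref{prob22} and \eqref{prob3} reduce to $a(\bm u,\bm v_h)=(\bm f,\bm v_h)$ and $a(\bm u_h,\bm v_h)=(\bm f,\bm v_h)$ for every $\bm v_h\in V_h^0$, where $V_h^0\subset H_0^2(\tc;\Omega)$ by conformity and the boundary conditions built into $V_h^0$; subtracting yields $a(\bm u-\bm u_h,\bm v_h)=0$ for all $\bm v_h\in V_h^0$. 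Because $\bm u_h\in X_h$, for an arbitrary $\bm w_h\in X_h$ one has $\bm u_h-\bm w_h\in X_h\subset V_h^0$, and combining the coercivity \eqref{prob5}, this orthogonality, and the continuity $|a(\bm v,\bm w)|\le\|\bm v\|_{H^2(\tc;\Omega)}\|\bm w\|_{H^2(\tc;\Omega)}$ gives
\[
\alpha\|\bm u_h-\bm w_h\|_{H^2(\tc;\Omega)}^2
\le a(\bm u_h-\bm w_h,\bm u_h-\bm w_h)
= a(\bm u-\bm w_h,\bm u_h-\bm w_h)
\le C\|\bm u-\bm w_h\|_{H^2(\tc;\Omega)}\|\bm u_h-\bm w_h\|_{H^2(\tc;\Omega)}.
\]
A triangle inequality then produces $\|\bm u-\bm u_h\|_{H^2(\tc;\Omega)}\le C\inf_{\bm w_h\in X_h}\|\bm u-\bm w_h\|_{H^2(\tc;\Omega)}$.

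The second stage converts the infimum over the constrained space $X_h$ into one over the full space $V_h^0$, so that the computable interpolant $\Pi_h\bm u$ may be used. Here I would invoke the Babu\v{s}ka--Brezzi condition \eqref{prob6}: given any $\bm v_h\in V_h^0$, inf-sup stability furnishes $\bm z_h\in V_h^0$ with $b(\bm z_h,q_h)=b(\bm v_h,q_h)$ for all $q_h\in S_h^0$ and $\|\bm z_h\|_{H^2(\tc;\Omega)}\le C\sup_{q_h}|b(\bm v_h,q_h)|/\|q_h\|_{H^1(\Omega)}$. Since $\bm u$ is weakly divergence free, $b(\bm u,q_h)=0$, hence $b(\bm v_h,q_h)=b(\bm v_h-\bm u,q_h)$, so $\|\bm z_h\|_{H^2(\tc;\Omega)}\le C\|\bm u-\bm v_h\|_{H^2(\tc;\Omega)}$ and $\bm v_h-\bm z_h\in X_h$. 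Therefore $\|\bm u-(\bm v_h-\bm z_h)\|_{H^2(\tc;\Omega)}\le C\|\bm u-\bm v_h\|_{H^2(\tc;\Omega)}$, and taking $\bm v_h=\Pi_h\bm u$ yields $\inf_{\bm w_h\in X_h}\|\bm u-\bm w_h\|_{H^2(\tc;\Omega)}\le C\|\bm u-\Pi_h\bm u\|_{H^2(\tc;\Omega)}$. Equivalently, one may simply quote the standard saddle-point error estimate, whose hypotheses \eqref{prob5} and \eqref{prob6} were verified in the preceding lemma and which bounds the velocity error by the full best approximation plus a pressure term that vanishes because $p=0$.

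Finally I would sum the local interpolation estimates of Theorem~\ref{err-interp} over $K\in\mathcal T_h$. Writing
\[
\|\bm u-\Pi_h\bm u\|_{H^2(\tc;\Omega)}^2
=\sum_{K\in\mathcal T_h}\left(\|\bm u-\Pi_K\bm u\|_K^2+\|\nabla\times(\bm u-\Pi_K\bm u)\|_K^2+\|(\nabla\times)^2(\bm u-\Pi_K\bm u)\|_K^2\right),
\]
the two bounds of Theorem~\ref{err-interp} supply $O(h^s)$ for the $L^2$- and $\tc$-parts and $O(h^{s-1})$ for the curl-curl part; the weakest term dictates the global rate $O(h^{s-1})$, and the assumed regularity $\bm u,\nabla\times\bm u\in\bm H^s(\Omega)$ assembles the norms $\|\bm u\|_s+\|\nabla\times\bm u\|_s$, giving the claimed bound.

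The main obstacle is the passage in the second stage. Because the commuting relation $\Pi_h\nabla=\nabla\pi_h$ fails for the Lagrange interpolant $\pi_h$ (the edge moments of the interpolation error are not orthogonal to the required polynomial space), the interpolant $\Pi_h\bm u$ is in general \emph{not} discretely divergence free, so it cannot be fed directly into the kernel-coercivity argument of the first stage. The inf-sup condition \eqref{prob6} is precisely the tool that repairs this defect and lets one replace best approximation in $X_h$ by the interpolation error in the unconstrained space $V_h^0$; the remaining steps are routine applications of Theorem~\ref{err-interp}.
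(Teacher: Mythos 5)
Your argument is correct and is essentially the paper's proof with the black box opened: the paper simply cites the abstract saddle-point error estimate (Theorem 2.45 in Monk's book) to obtain quasi-optimality $\|\bm u-\bm u_h\|_{H^2(\mathrm{curl};\Omega)}\le C\inf_{\bm v_h\in V_h}\|\bm u-\bm v_h\|_{H^2(\mathrm{curl};\Omega)}$ and then bounds the infimum by the interpolation error from Theorem~\ref{err-interp}, exactly as you do. Your two-stage derivation (kernel coercivity plus inf-sup to pass from $X_h$ to $V_h^0$) is precisely the content of that cited theorem, so no further comparison is needed.
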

\begin{proof}
	Applying Theorem 2.45 in \cite{Monk2003}, we have
	\begin{equation*}
	\begin{split}
	\|\bm u-\bm u_h\|_{H^2(\mathrm{curl};\Omega)} &\leq \inf_{v_h\in V_h}\|\bm u-\bm v_h\|_{H^2(\mathrm{curl};\Omega)} \leq
	\|\bm u-\Pi_h \bm u\|_{H^2(\mathrm{curl};\Omega)}\leq C h^{s-1}(\left\|\bm u\right\|_s+\left\|\nabla\times\bm u\right\|_s).
	\end{split}
	\end{equation*}
\end{proof}
We consider the following auxiliary problem: Find $\bm w\in X$, s.t.
\begin{equation}\label{auxiprob1}
a(\bm v,\bm w) =((\nabla\times)^2(\bm u-\bm u_h), \bm v)\quad \forall \bm v\in X,
\end{equation}
where $X$ is defined in \eqref{spac-02}.
Moreover, we assume
\begin{align}\label{regularity}
\left\|\nabla\times\bm w\right\|_2\leq \left\|\nabla\times(\bm u-\bm u_h)\right\|.
\end{align}
\begin{theorem}\label{conv-u}
	Under the assumptions of Theorem \ref{conv-curlcurl} and \eqref{regularity}, we have
	\begin{equation*}
	\begin{split}
	\|\nabla\times(\bm u-\bm u_h)\| \leq C h^{s}(\left\|\bm u\right\|_s+\left\|\nabla\times\bm u\right\|_s),\quad 1+\delta \leq s\leq k,\;\delta>0.
	\end{split}
	\end{equation*}
\end{theorem}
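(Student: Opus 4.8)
The plan is to run a duality (Aubin--Nitsche type) argument built on the auxiliary problem \eqref{auxiprob1}, which should buy one extra power of $h$ relative to the $H^2(\tc)$ estimate of Theorem \ref{conv-curlcurl}. Write $\bm e=\bm u-\bm u_h$. Since $p=p_h=0$, subtracting \eqref{prob3} from \eqref{prob22} tested against $\bm v_h\in V_h^0\subset H_0^2(\tc;\Omega)$ gives the Galerkin orthogonality $a(\bm e,\bm v_h)=0$ for all $\bm v_h\in V_h^0$. The quantity to control is $\|\nabla\times\bm e\|$, so I would feed the error into \eqref{auxiprob1}, taking care that $\bm e$ need not lie in the test space $X$.

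First I would resolve that mismatch using Lemma \ref{Helm}: decompose $\bm e=\bm e_0+\nabla\phi$ with $\bm e_0\in X$ and $\nabla\phi\in\nabla H_0^1(\Omega)$. Because $\nabla\times\nabla\phi=0$, one has $\nabla\times\bm e=\nabla\times\bm e_0$ and $(\nabla\times)^2\bm e=(\nabla\times)^2\bm e_0$, whence $a(\bm e,\cdot)=a(\bm e_0,\cdot)$. Taking $\bm v=\bm e_0\in X$ in \eqref{auxiprob1} yields $a(\bm e_0,\bm w)=((\nabla\times)^2\bm e,\bm e_0)$, and an integration by parts—whose boundary terms vanish since $\bm n\times\bm e_0=0$ on $\partial\Omega$ as $\bm e_0\in X\subset H_0^2(\tc;\Omega)$—turns the right-hand side into $(\nabla\times\bm e,\nabla\times\bm e_0)=\|\nabla\times\bm e\|^2$. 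Combining, I obtain the key identity $\|\nabla\times\bm e\|^2=a(\bm e,\bm w)$.

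Next I would invoke Galerkin orthogonality with the interpolant $\Pi_h\bm w$, which lies in $V_h^0$ because $\bm w\in X$ forces $\bm n\times\bm w=0$ and $\nabla\times\bm w=0$ on $\partial\Omega$ and these boundary traces are preserved by $\Pi_h$ (by the conformity established earlier). Then $a(\bm e,\Pi_h\bm w)=0$, so $\|\nabla\times\bm e\|^2=a(\bm e,\bm w-\Pi_h\bm w)$, and Cauchy--Schwarz gives $\|\nabla\times\bm e\|^2\le\|(\nabla\times)^2\bm e\|\,\|(\nabla\times)^2(\bm w-\Pi_h\bm w)\|$. Applying the second estimate of Theorem \ref{err-interp} element by element with $s=2$ gives $\|(\nabla\times)^2(\bm w-\Pi_h\bm w)\|\le Ch\|\nabla\times\bm w\|_2$ (here $\nabla\times\bm w\in H^2$ and $\bm w$ is assumed regular enough that $\Pi_h\bm w$ is defined), and the regularity assumption \eqref{regularity}, $\|\nabla\times\bm w\|_2\le\|\nabla\times\bm e\|$, then produces $\|\nabla\times\bm e\|\le Ch\|(\nabla\times)^2\bm e\|$ after cancelling one factor of $\|\nabla\times\bm e\|$. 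Finally, Theorem \ref{conv-curlcurl} bounds $\|(\nabla\times)^2\bm e\|\le\|\bm u-\bm u_h\|_{H^2(\tc;\Omega)}\le Ch^{s-1}(\|\bm u\|_s+\|\nabla\times\bm u\|_s)$, and multiplying by $h$ yields the claimed $O(h^s)$ rate.

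The main obstacle is the correct setup of the duality rather than any single estimate: I must reconcile the fact that $\bm e$ is only discretely divergence free—orthogonal to $\nabla S_h^0$, not to all of $\nabla H_0^1(\Omega)$—with the requirement that the test function in \eqref{auxiprob1} belong to $X$. The Helmholtz splitting of Lemma \ref{Helm} is what makes this work, together with the observation that both $a(\cdot,\cdot)$ and $\nabla\times$ annihilate the gradient part. The secondary technical points are the vanishing of the boundary terms in the integration by parts, which relies on the full boundary conditions encoded in $H_0^2(\tc;\Omega)$, and the verification that $\Pi_h\bm w\in V_h^0$; both follow from the boundary behaviour of functions in $X$ and the conformity already proved for the elements.
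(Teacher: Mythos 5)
Your proposal is correct and follows essentially the same route as the paper: the Helmholtz splitting of Lemma \ref{Helm} applied to the error, the choice $\bm v=\bm e_0$ in the dual problem \eqref{auxiprob1} to obtain the identity $\|\nabla\times(\bm u-\bm u_h)\|^2=a(\bm u-\bm u_h,\bm w)$, Galerkin orthogonality against a discrete approximant of $\bm w$ (the paper takes an infimum over $V_h^0$ where you use $\Pi_h\bm w$ explicitly), the interpolation estimate of Theorem \ref{err-interp} with $s=2$, the regularity assumption \eqref{regularity}, and the $H^2(\mathrm{curl})$ bound of Theorem \ref{conv-curlcurl}. Your added remarks on the vanishing boundary terms and on why $\Pi_h\bm w\in V_h^0$ are points the paper leaves implicit, but they do not change the argument.
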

\begin{proof}
	Using Lemma \ref{Helm}, we can decompose $\bm u-\bm u_h$ as $\bm u-\bm u_h=\bm \pi_1+\bm \pi_2$ with $\bm \pi_1\in X$ and $\bm \pi_2\in \nabla H_0^1(\Omega)$. We take $\bm v=\bm \pi_1$ to get
	\[a(\bm u-\bm u_h, \bm w)=a(\bm\pi_1, \bm w)=((\nabla\times)^2(\bm u-\bm u_h),\bm{\pi}_1)=\left\|\nabla\times(\bm u-\bm u_h)\right\|^2.\]
	Due to the fact that $p=p_h=0$, we arrive at
	\[a(\bm u-\bm u_h, \bm w_h)=0,\quad \forall \bm w_h\in X_h,\]
	which, together with Theorem \ref{err-interp}  and \eqref {regularity},  leads to
	\begin{align*}
	&\left\|\nabla\times(\bm u-\bm u_h)\right\|^2=a(\bm u-\bm u_h, \bm w-\bm w_h)\\
	\leq & C\left\|\bm u-\bm u_h\right\|_{H^2(\tc;\Omega)}\inf_{\bm w_h\in X_h}\left\| (\nabla\times)^2(\bm w-\bm w_h)\right\|\\
	= & C\left\|\bm u-\bm u_h\right\|_{H^2(\tc;\Omega)}\inf_{\bm w_h\in V_h^0}\left\| (\nabla\times)^2(\bm w-\bm w_h)\right\|\\
	\leq & Ch\left\|\bm u-\bm u_h\right\|_{H^2(\tc;\Omega)}\left\|\nabla\times\bm w\right\|_2\\
	\leq & Ch\left\|\bm u-\bm u_h\right\|_{H^2(\tc;\Omega)}\left\|\nabla\times(\bm u-\bm u_h)\right\|,
	\end{align*}
	Thus, we complete the proof.
\end{proof}
\begin{theorem}\label{conv-curlu}
	Under the assumptions of Theorem \ref{conv-u}, we have
	\begin{equation*}
	\begin{split}
	\|\bm u-\bm u_h\| \leq C h^{s}(\left\|\bm u\right\|_s+\left\|\nabla\times\bm u\right\|_s),\quad 1+\delta \leq s\leq k,\;\delta>0.
	\end{split}
	\end{equation*}
\end{theorem}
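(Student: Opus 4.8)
The plan is to bound the $L^2$ error by comparing $\bm u_h$ with the global interpolant rather than by a direct Aubin--Nitsche duality. A naive duality would introduce a scalar dual variable whose potential is only $H^1$ (because $\bm u_h$ is not normal-continuous, so $\nabla\cdot\bm u_h$ carries inter-element jumps and the dual is not $H^2$), and no $h$-gain would then be available from elliptic regularity. Instead, set $\bm u_h^I:=\Pi_h\bm u$. Since $\bm u\times\bm n=0$ and $\nabla\times\bm u=0$ on $\partial\Omega$, every boundary degree of freedom of $\Pi_h\bm u$ vanishes, so $\bm u_h^I\in V_h^0$, and the triangle inequality gives $\|\bm u-\bm u_h\|\le\|\bm u-\bm u_h^I\|+\|\bm u_h^I-\bm u_h\|$. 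The first term is $O(h^s)$ immediately from \eqref{inter-u} in Theorem \ref{err-interp}.

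The essential step is to control $\bm u_h^I-\bm u_h\in V_h^0$ through a discrete Hodge/Helmholtz decomposition. First I would verify the compatibility $\nabla S_h^0\subset V_h^0$: for $q_h\in S_h^0$ the gradient $\nabla q_h$ lies in $Q_{k-1,k}\times Q_{k,k-1}$ on rectangles and in $(P_{k-1})^2\subset\mathcal R_k$ on triangles, is curl-free and tangentially continuous, and has vanishing tangential trace on $\partial\Omega$; hence $\nabla q_h\in V_h^0$. Consequently $V_h^0=X_h\oplus\nabla S_h^0$ is an $L^2$-orthogonal splitting, with $X_h$ the discretely divergence-free space from the well-posedness lemma, and I may write $\bm u_h^I-\bm u_h=\bm\sigma_h+\nabla\rho_h$ with $\bm\sigma_h\in X_h$ and $\rho_h\in S_h^0$.

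It remains to estimate the two components. Because $\nabla\times\nabla\rho_h=0$, the discrete Friedrichs inequality on $X_h$ (the one already used in \eqref{prob5}) yields $\|\bm\sigma_h\|\le C\|\nabla\times\bm\sigma_h\|=C\|\nabla\times(\bm u_h^I-\bm u_h)\|\le C\big(\|\nabla\times(\bm u-\bm u_h^I)\|+\|\nabla\times(\bm u-\bm u_h)\|\big)$, which is $O(h^s)$ by the interpolation estimate of Theorem \ref{err-interp} together with Theorem \ref{conv-u}. For the gradient part, orthogonality gives $\|\nabla\rho_h\|^2=(\bm u_h^I-\bm u_h,\nabla\rho_h)=(\bm u_h^I-\bm u,\nabla\rho_h)$, since $(\bm u-\bm u_h,\nabla\rho_h)=0$: indeed $(\bm u,\nabla\rho_h)=0$ because $\bm u\in X$ and $\rho_h\in S_h^0\subset H_0^1(\Omega)$, while $(\bm u_h,\nabla\rho_h)=0$ is the second equation of \eqref{prob3}. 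Cauchy--Schwarz then gives $\|\nabla\rho_h\|\le\|\bm u-\bm u_h^I\|=O(h^s)$, and collecting the bounds proves the claim. The main obstacle is precisely the structural input of the middle paragraph, namely the inclusion $\nabla S_h^0\subset V_h^0$ and the discrete Friedrichs inequality on $X_h$: these are what render the troublesome gradient component \emph{discrete}, hence directly controllable by the interpolation error, instead of forcing one to estimate the gradient of an only-$H^1$ potential via elliptic regularity.
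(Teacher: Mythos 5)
Your argument is correct and coincides with the method the paper actually uses: the paper's ``proof'' is only a one-line citation to \cite[Theorem 6]{Sun2016A}, and the argument there is exactly your discrete Helmholtz decomposition of $\Pi_h\bm u-\bm u_h$ into a discretely divergence-free part (controlled by the discrete Friedrichs inequality together with Theorem \ref{conv-u} and Theorem \ref{err-interp}) plus a discrete gradient (killed by the orthogonality relations from \eqref{prob3} and $\bm u\in X$). In effect your write-up supplies, correctly, the details the paper delegates to the reference.
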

\begin{proof}
	 This theorem can be proved by the same method as employed in \cite[Theorem 6]{Sun2016A}.
\end{proof}
\section{numerical experiments}
{We compare our finite element method (FEM) with a  mixed finite element method (MFEM) \cite{Sun2016A} and Hodge decomposition (HD) method \cite{Brenner2017Hodge}.

From the Table \ref{tabcompare1} and \ref{tabcompare2}, we can see clearly the comparison among the three methods in this case. To further show the effectiveness of our method, we take $\Omega=(0,1)\times(0,1)$ and partition it into $N^2$ squares. On the one hand, the total DOFs of the FEMs and MFEMs  are $M_1$ and  $M_1+\Delta_1$ respectively, where $M_1=2(N+1)^2+6k(N+1)N+(3k^2-2k)N^2$,
$\Delta_1=2N^2(k^2-2k-1)-4N-1$\;($\Delta_1>0$ when $k\geq3,N\geq2$). In other words, the linear system that needs to be solved in our method is smaller than the one in MFEMs, especially when $N$ is a large number. On the other hand, even if the HD method enjoys the advantage of smaller computation cost, the convergence order of it depends not only on the regularity of the solutions but also the regularity of the domain, e.g., the highest convergence order is 2 when $\Omega$  is a square. Furthermore, we partition $\Omega$ into $2N^2$ uniform triangles of regular pattern. The total DOFs of the FEMs and MFEMs in this case are $M_2$ and $M_2+\Delta_2$, where $M_2=2(N+1)^2+3k(3N^2+2N)+(3k^2-5k)N^2$,
$\Delta_2=2N^2(k^2-k-4)-8N-1$ ($\Delta_2>0$ when $k\geq 3,N\geq3$).

\begin{table}[!ht]
		\caption{Rectangle mesh: Comparison among FEMs, MFEMs and HD with $\Delta_1=2N^2(k^2-2k-1)-4N-1$ ($\Delta_1>0$, when $k\geq 3,N\geq2$ ).} \label{tabcompare1}
		\centering
		\begin{adjustwidth}{0cm}{0cm}
			\resizebox{!}{2.1cm}
			{
				\begin{tabular}{cccccccccc}
					\hline
					\multicolumn{1}{c}{\multirow{2}{1.5cm}{Methods}}& \multicolumn{1}{c}{\multirow{2}{1cm}{FEMs}} &\multicolumn{1}{c}{\multirow{1}{0cm}{}}  &\multicolumn{1}{c}{\multirow{2}{1cm}{MFEMs}}&\multicolumn{1}{c}{\multirow{1}{0cm}{}}  &\multicolumn{3}{c}{HD ($k\geq 1$)}\\
					\cline{6-8}\multicolumn{1}{c}{}&\multicolumn{1}{c}{($k\geq 2$)}  &\multicolumn{1}{c}{}  &\multicolumn{1}{c} {($k\geq 1$)} &\multicolumn{1}{c}{} &\multicolumn{3}{c} {$LS_i(1\leq i\leq4)$} \\ \hline
					
					elements&$Q_{k,k+1}\times Q_{k+1,k}\times Q_{k+1}$&&$(Q_{k-1,k}\times Q_{k,k-1})^2\times Q_k$&&\multicolumn{3}{c}{$Q_k$}\\\hline
					local node DOFs&$4+4$&&$0+4$&&\multicolumn{3}{c}{$4$}\\\hline
					local edge DOFs&$8k+4k$&&$8k+4(k-1)$&&\multicolumn{3}{c}{$4(k-1)$}\\\hline
					local element  DOFs&$2k(k-1)+k^2$&&$4k(k-1)+(k-1)^2$&&\multicolumn{3}{c}{$(k-1)^2$}\\\hline
					convergence order in $H^2$(curl)-norm&$k$&&$k$&&\multicolumn{3}{c}{$\min\{\frac{\pi}{\omega}-\varepsilon,k\}$}\\\hline
					convergence order in $L^2$-norm&$k+1$&&$k$&&\multicolumn{3}{c}{$\min\{\frac{\pi}{\omega}-\varepsilon,k\}$}\\\hline
					regularity&solution&&solution&&\multicolumn{3}{c}{solution and domain}\\\hline
				
					global DOFs on a square with $N^2$ uniform rectangles&$M_1$&&$M_1+\Delta_1$&&\multicolumn{3}{c}{--}\\\hline
				\end{tabular}
			}
		\end{adjustwidth}
	\end{table}

\begin{table}[!ht]
	\caption{Triangular mesh: Comparison among FEMs, MFEMs and HD with $\Delta_2=2N^2(k^2-k-4)-8N-1$ ($\Delta_2>0$, when $k\geq 3,N\geq3$).} \label{tabcompare2}
	\centering
	\begin{adjustwidth}{0cm}{0cm}
		\resizebox{!}{2.3cm}
		{
			\begin{tabular}{cccccccccc}
				\hline
				\multicolumn{1}{c}{\multirow{2}{1.5cm}{Methods}}& \multicolumn{1}{c}{\multirow{2}{1cm}{FEMs}} &\multicolumn{1}{c}{\multirow{1}{0cm}{}}  &\multicolumn{1}{c}{\multirow{2}{1cm}{MFEMs}}&\multicolumn{1}{c}{\multirow{1}{0cm}{}}  &\multicolumn{3}{c}{HD ($k\geq 1$)}\\
				\cline{6-8}\multicolumn{1}{c}{}&\multicolumn{1}{c}{($k\geq 3$)}  &\multicolumn{1}{c}{}  &\multicolumn{1}{c} {($k\geq 1$)} &\multicolumn{1}{c}{} &\multicolumn{3}{c} {$LS_i(1\leq i\leq 4)$}\\ \hline
				
				elements&$R_{k+1}\times P_{k+1}$&&$(R_k)^2\times P_k$&&\multicolumn{3}{c}{$P_k$}\\\hline
					local node DOFs&$3+3$&&$0+3$&&$6$\\\hline
				local edge DOFs&$6k+3k$&&$6k+3(k-1)$&&$6(k-1)$\\\hline
				local element  DOFs&$k(k-2)+\frac{k^2-k}{2}$&&$2k(k-1)-\frac{(k-1)(k-2)}{2}$&&$(k-1)(k-2)$\\\hline
				
				convergence order in $H^2$(curl)-norm&$k$&&$k$&&\multicolumn{3}{c}{$k$}\\\hline
				convergence order in $L^2$-norm&$k+1$&&$k$&&\multicolumn{3}{c}{$k$}\\\hline
				regularity&solution&&solution&&\multicolumn{3}{c}{solution and domain}\\\hline
					global DOFs on a square with $2N^2$ uniform triangles&$M_2$&&$M_2+\Delta_2$&&\multicolumn{3}{c}{--}\\\hline
			\end{tabular}
		
		}
	\end{adjustwidth}
\end{table}


We further use several numerical examples to show the effectiveness of the  $H^2(\tc)$-conforming finite element method introduced in Section  \ref{app} and to verify the theoretical findings associated with the method.

\begin{example}\label{ex1}
\end{example}
We consider the problem \eqref{prob1} on a unit square $\Omega=(0,1)\times(0,1)$ with exact solution
\begin{equation}
\bm u=\left(
\begin{array}{c}
3\pi\sin^3(\pi x)\sin^2(\pi y)\cos(\pi y) \\
-3\pi \sin^3(\pi y)\sin^2(\pi x)\cos(\pi x)\\
\end{array}
\right).
\end{equation}

Then  the source term $\bm f$ can be obtained by a simple calculation. We have pointed out that $p=0$ if $\nabla\cdot \bm f=0$.

We first apply uniform rectangular partitions. Varying $h$ from $\frac{1}{40}$ to $\frac{1}{80}$, Table \ref{tab1} illustates the errors and convergence rates of $\bm u_h$ in several different norms. From the table, we can deduce that the numerical results coincide with the theoretical findings, which confirm the correctness of our analysis.

Our numerical examples are also conducted on a nonuniform triangle mesh. The numerical results are presented in Table \ref{tab2}, which indicates again the correctness of our numerical scheme and the corresponding error estimates.
\begin{table}[h]
	\centering
	\caption{Example \ref{ex1}: numerical results using the lowest-order $H^2(\tc)$ elements on rectangles} \label{tab1}
	\begin{tabular}{cccccccc}
		\hline
		$n$ &$\left\|\bm u-\bm u_h\right\|$& rates&$\left\|\nabla\times(\bm u-\bm u_h)\right\|$& rates&$\left\|\nabla\times\nabla\times(\bm u-\bm u_h)\right\|$& rates\\
		\hline
		$40 \times 40$ &2.5485449381e-05&    &1.1472108502e-03&&2.9760181442e-01& \\
		$50\times 50$&1.2854795005e-05 &3.0670 &5.8764134991e-04& 2.9979& 1.9050383117e-01 &1.9991\\
		$60\times 60$&7.3774307075e-06& 3.0457 &3.4015484126e-04 &2.9986 &1.3230890722e-01 &1.9994\\
		$70\times 70$&4.6222504985e-06& 3.0330 &2.1424041027e-04 &2.9990& 9.7213001130e-02 &1.9996\\
		$80\times 80$& 3.0862396038e-06& 3.0250& 1.4353829491e-04 &2.9993 &7.4431912057e-02 &1.9997\\
		\hline
	\end{tabular}
\end{table}

\begin{table}[h]
	\centering
	\caption{Example \ref{ex1}: numerical results  using the lowest-order $H^2(\tc)$ elements on triangles} \label{tab2}
	\begin{tabular}{cccccccc}
		\hline
		$h$ &$\left\|\bm u-\bm u_h\right\|$& rates&$\left\|\nabla\times(\bm u-\bm u_h)\right\|$& rates&$\left\|\nabla\times\nabla\times(\bm u-\bm u_h)\right\|$& rates\\
		\hline
		$1/10$ &6.7345327239e-05&    &2.7173652598e-03&&3.2457589560e-01& \\
		$1/20$&3.8515300864e-06&4.1281&1.6843052463e-04&4.0120&4.0729581739e-02&2.9944\\
		$1/40$&2.3512856825e-07& 4.0339&1.0490867329e-05&4.0049&5.0982984036e-03&2.9980\\
		$1/80$&1.4617020486e-08&4.0077&6.5478343914e-07&4.0020&6.3766061097e-04&2.9992\\
		\hline
	\end{tabular}
\end{table}

\begin{example}\label{ex2}
\end{example}
We also consider the problem \eqref{prob1} on an L-shape domain $\Omega=(0,1)\times(0,1)\slash[0.5,1)\times(0,0.5]$ with source term $\bm f=(1,1)^T$.

We adopt the graded mesh introduced in \cite{Li2013LNG} with a grading parameter $\kappa$ (see Figure \ref{kap5} and \ref{kap245}). When $\kappa=0.5$, the mesh is uniform. Table \ref{tab4} illustrates errors and convergence rates of $\bm u_h$ in this case.  Due to the singularity of the domain, the rates of convergence deteriorate.  And rates of $4/3$ and $2/3$ or so can be observed from the table. When $\kappa=0.245$, the mesh is dense near the singular point $(0,0)$. The numerical results in this case are shown in Table \ref{tab5}, the convergence rates are improved significantly.

\begin{figure}
	\centering
\includegraphics[width=0.32\linewidth, height=0.2\textheight]{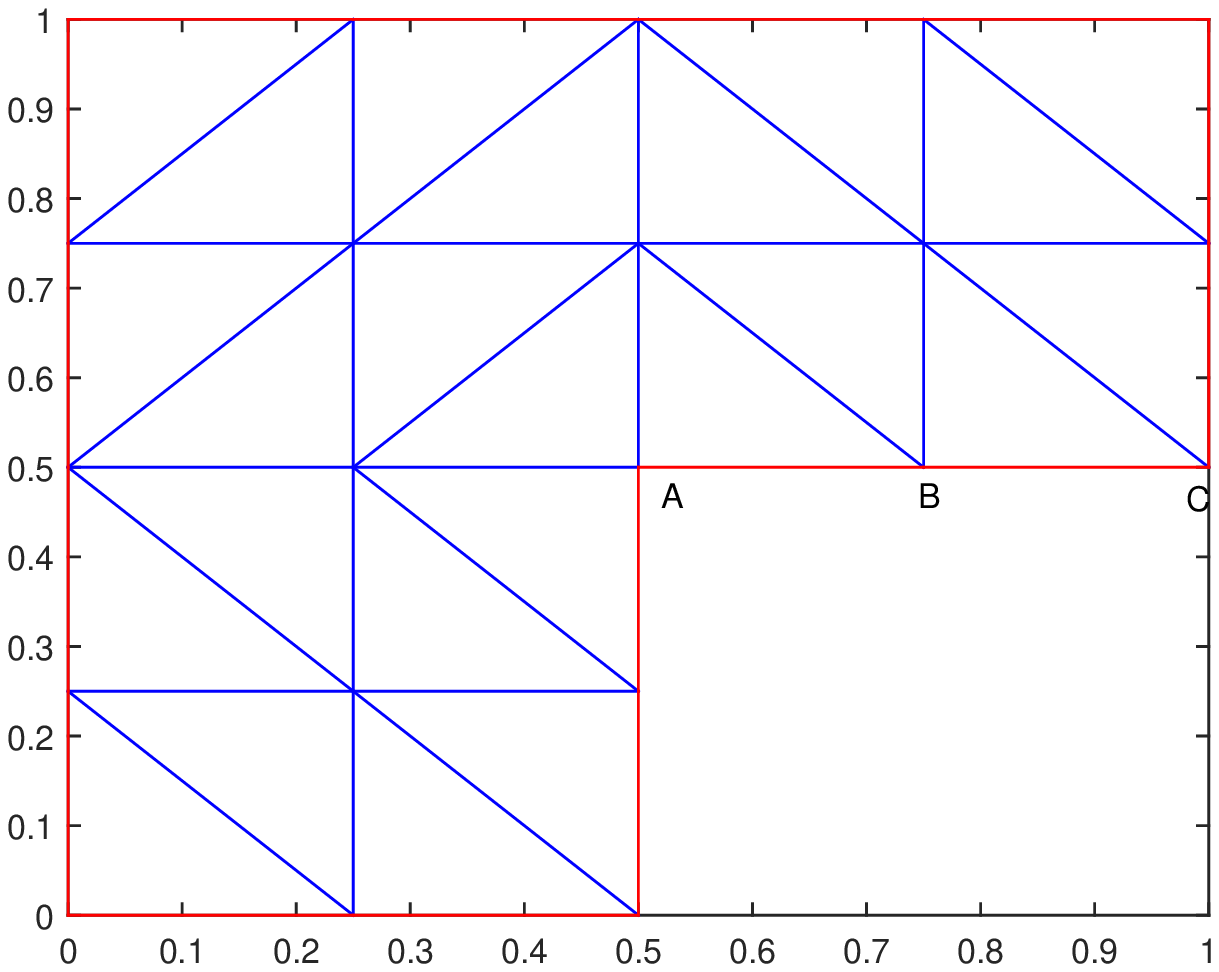}
\includegraphics[width=0.32\linewidth, height=0.2\textheight]{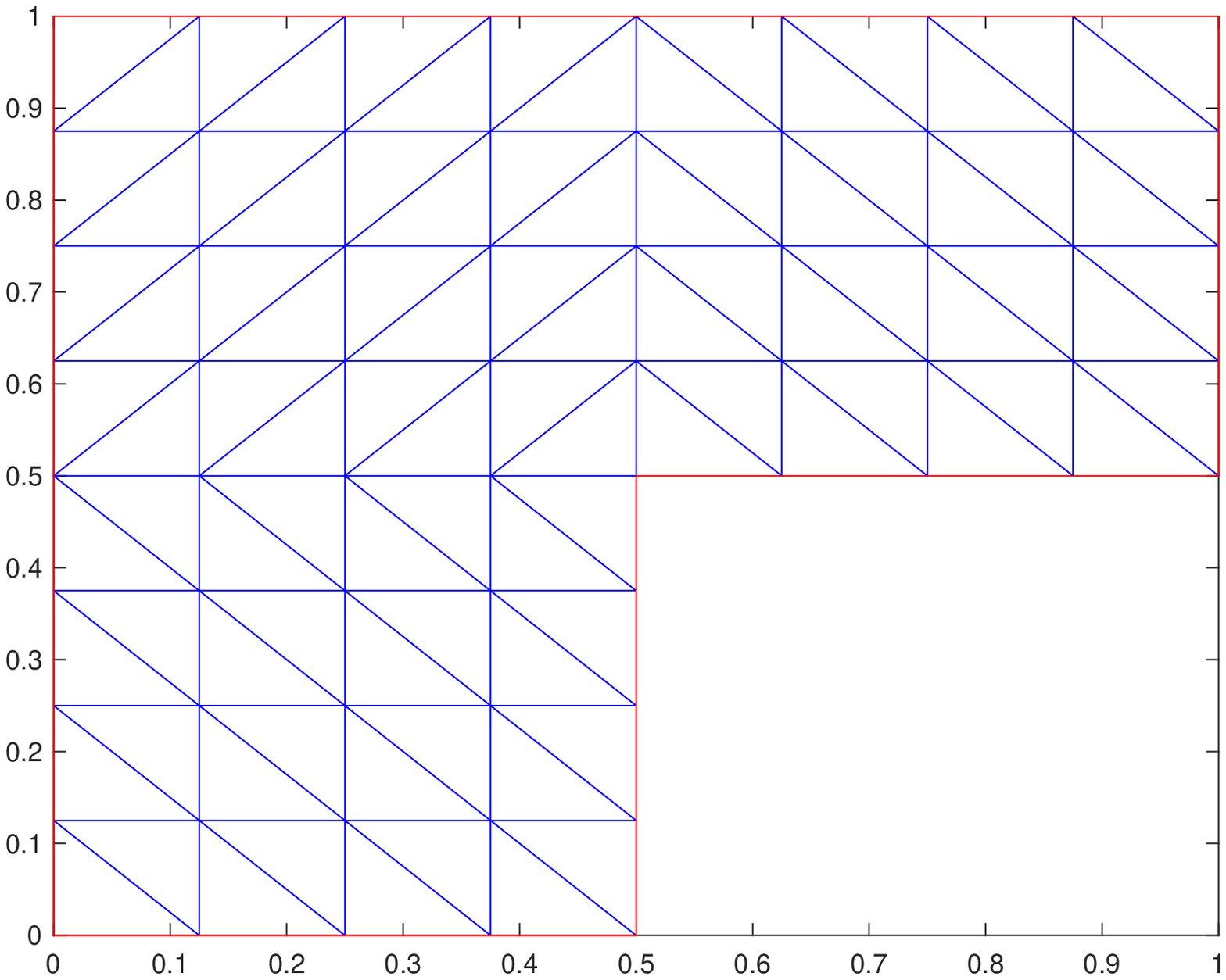}
\includegraphics[width=0.32\linewidth, height=0.2\textheight]{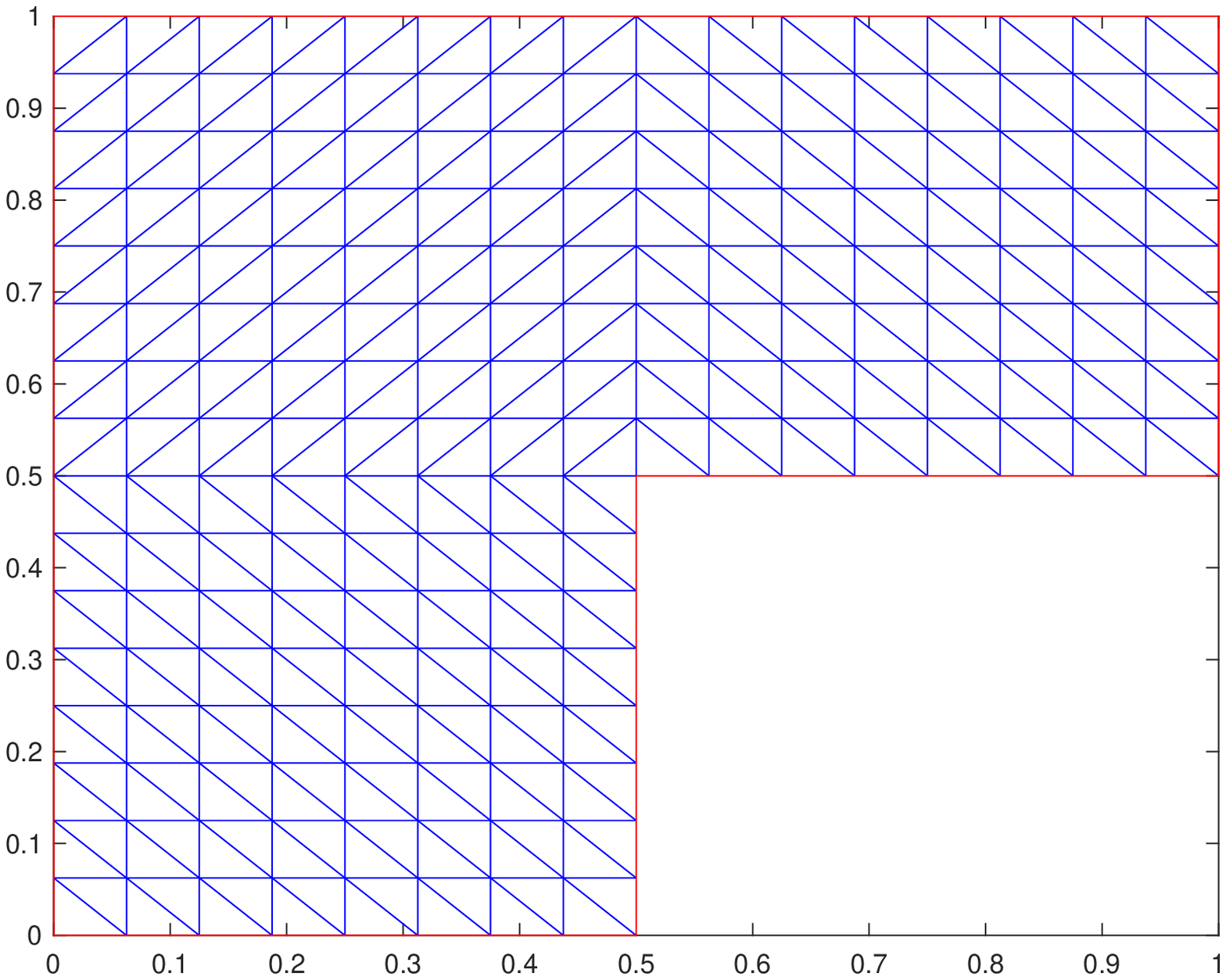}\\
	$n=1$\quad\quad\quad\quad\quad\quad\quad\quad\quad\quad\quad $n=2$\quad\quad\quad\quad\quad\quad\quad\quad\quad\quad\quad\quad\quad$n=3$
\caption{graded mesh with $\kappa=\frac{|AB|}{|AC|}=0.5$}\label{kap5}
\end{figure}
\begin{figure}
	\centering
	\includegraphics[width=0.32\linewidth, height=0.2\textheight]{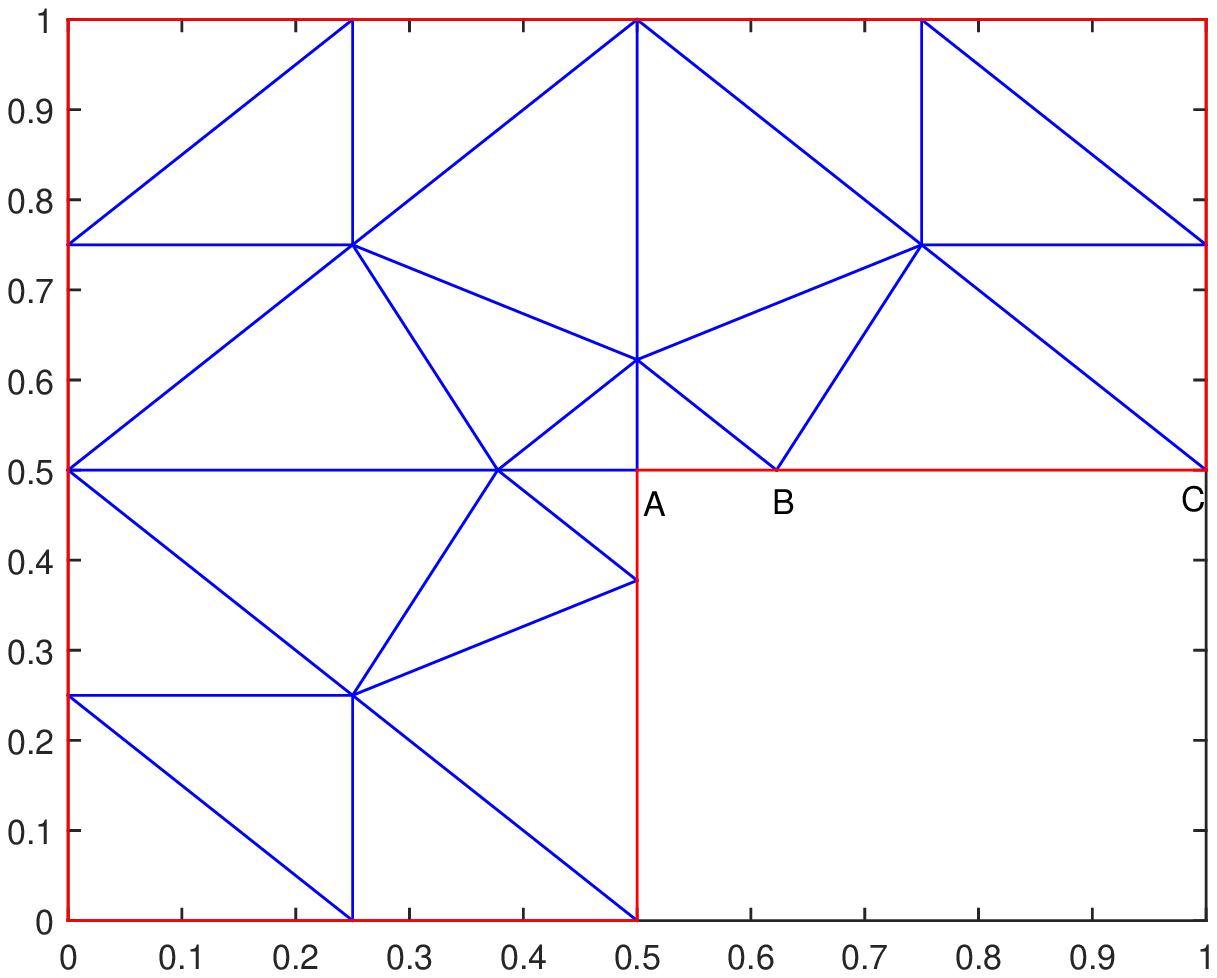}
	\includegraphics[width=0.32\linewidth, height=0.2\textheight]{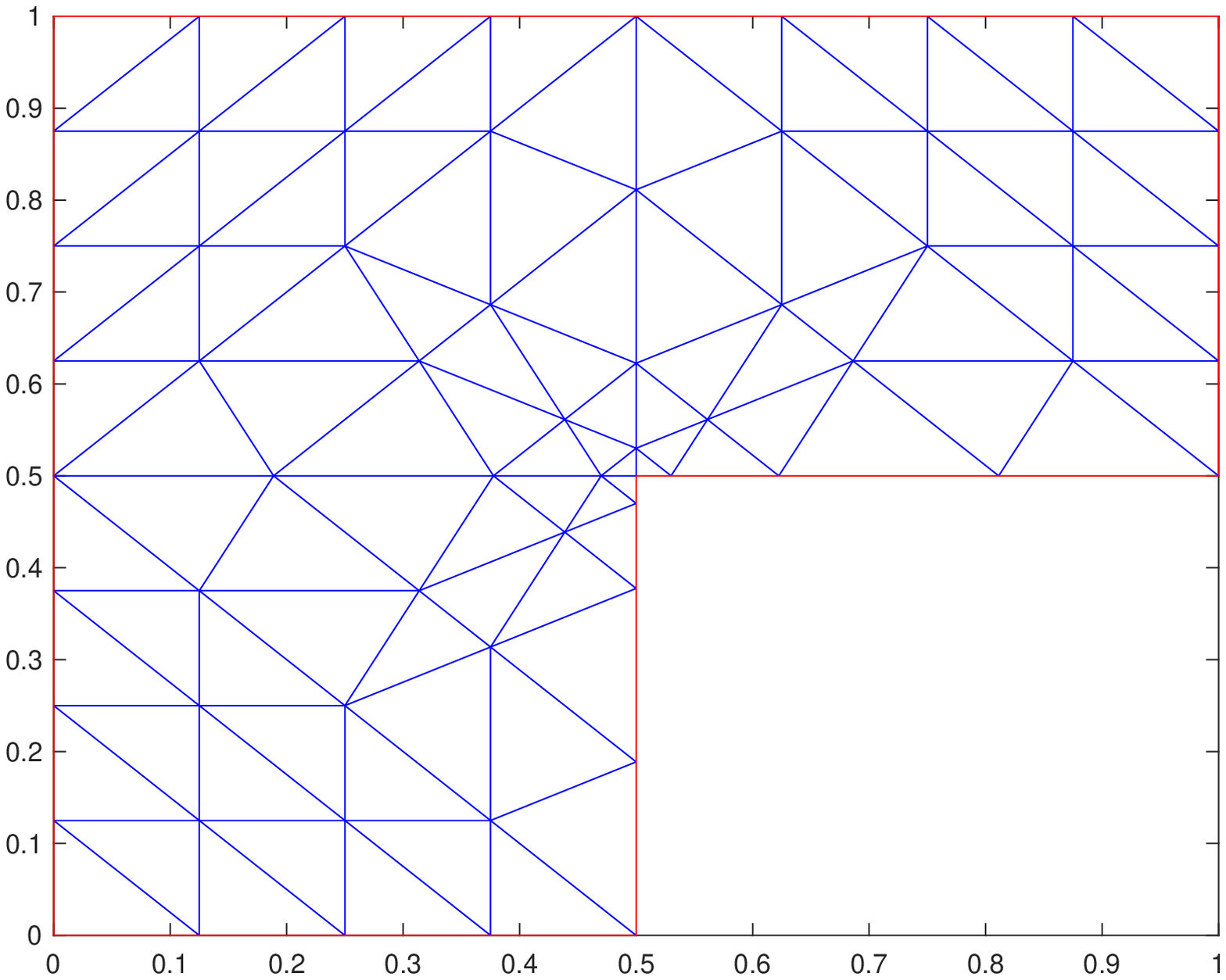}
	\includegraphics[width=0.32\linewidth, height=0.2\textheight]{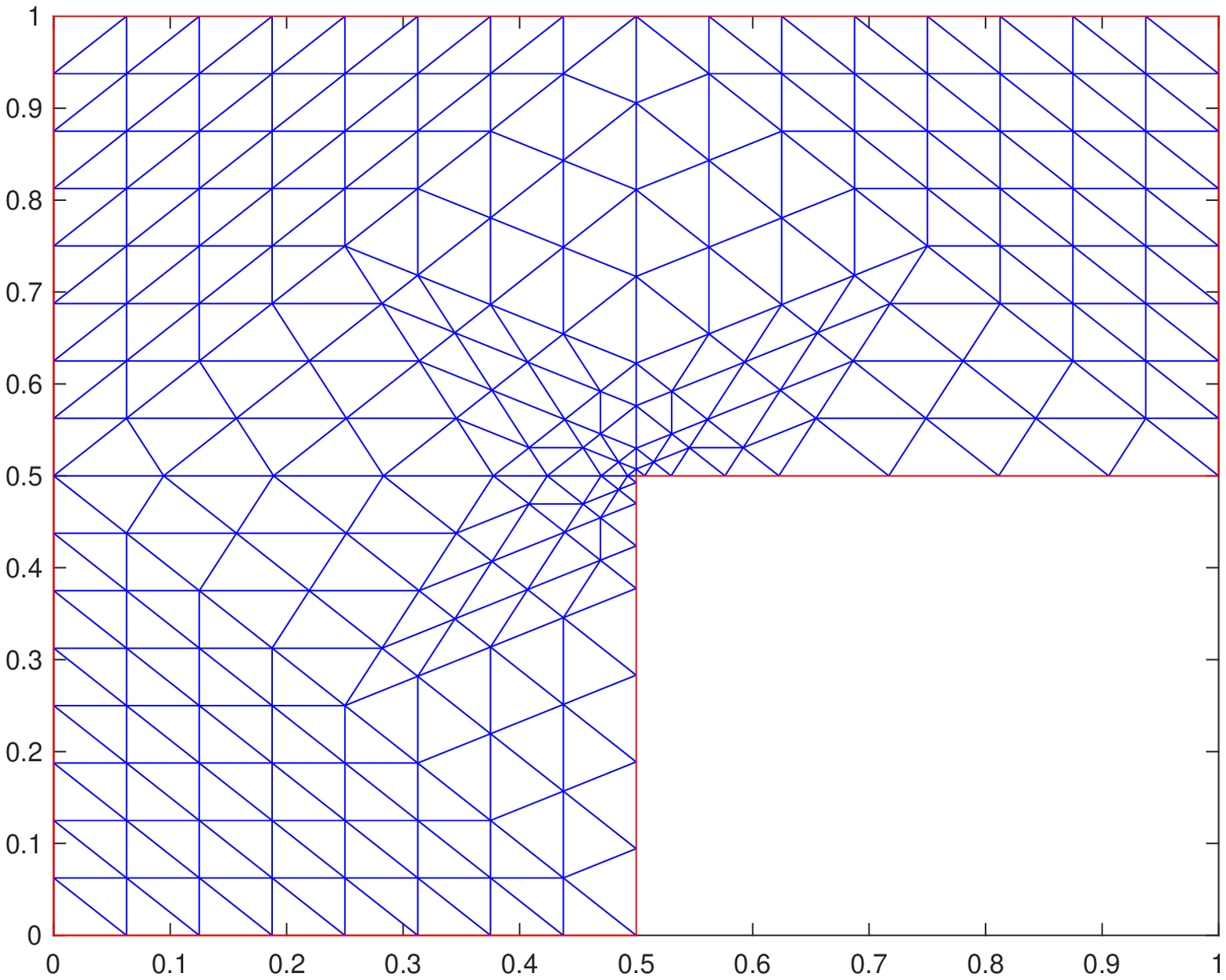}\\
	$n=1$\quad\quad\quad\quad\quad\quad\quad\quad\quad\quad\quad $n=2$\quad\quad\quad\quad\quad\quad\quad\quad\quad\quad\quad\quad\quad$n=3$
	\caption{graded mesh with $\kappa=\frac{|AB|}{|AC|}=0.245$}\label{kap245}
\end{figure}
\begin{table}[h]
	\centering
	\caption{Numerical results using the lowest-order $H^2(\tc)$ elements on triangles with $\kappa=0.5$} \label{tab4}
	\begin{tabular}{cccccccc}
		\hline
		$n$ &$\frac{\left\|\bm u_n-\bm u_{n+1}\right\|}{\left\|\bm u_{n+1}\right\|}$&order&$\frac{\left\|\nabla\times(\bm u_n-\bm u_{n+1})\right\|}{\left\|\nabla\times\bm u_{n+1}\right\|}$& order&$\frac{\left\|(\nabla\times)^2(\bm u_n-\bm u_{n+1})\right\|}{\left\|(\nabla\times)^2\bm u_{n+1}\right\|}$& order\\
		\hline
		$1$&8.1356190941e-03&1.7548&1.4488729983e-02&2.9705&6.5971068288e-02 &1.9539\\
		$2$&2.4107073619e-03 &1.3648 &1.8484963524e-03&2.7463&1.7028612632e-02 &1.7428\\
		$3$&9.3603277154e-04&1.3374& 2.7548691193e-04&2.0677&5.0881371807e-03&1.2316\\
		$4$&3.7041213929e-04&1.3345 &6.5715704328e-05&1.5270&2.1668185615e-03&0.8163\\
		$5$&1.4687454186e-04&&2.2803619189e-05&&1.2304934879e-03&\\
		\hline
	\end{tabular}
\end{table}

\begin{table}[h]
	\centering
	\caption{Numerical results using the lowest-order $H^2(\tc)$ elements on triangles with $\kappa=0.245$}\label{tab5}
	\begin{tabular}{cccccccc}
		\hline
		$n$ &$\frac{\left\|\bm u_n-\bm u_{n+1}\right\|}{\left\|\bm u_{n+1}\right\|}$&order&$\frac{\left\|\nabla\times(\bm u_n-\bm u_{n+1})\right\|}{\left\|\nabla\times\bm u_{n+1}\right\|}$& order&$\frac{\left\|(\nabla\times)^2(\bm u_n-\bm u_{n+1})\right\|}{\left\|(\nabla\times)^2\bm u_{n+1}\right\|}$& order\\
		\hline
		$1$&6.7412165834e-03&3.4251&1.6654125070e-02&2.9941&7.0276797462e-02&1.9870\\
		$2$&6.2761308517e-04&3.0702 &2.0903090109e-03&2.9976&1.7727617962e-02&1.9811\\
		$3$&7.4723734302e-05&2.8464& 2.6171918736e-04&2.9989&4.4901969525e-03&1.9596\\
		$4$&1.0390127924e-05&2.7527 &3.2740708974e-05&2.9989&1.1544428918e-03&1.9104\\
		$5$&1.5415683941e-06&&4.0957086477e-06&&3.0711331564e-04&\\
		\hline
	\end{tabular}
\end{table}

\section{Conclusion}
In this paper, we discuss $H^2$(curl)-conforming finite elements in 2-D and employ them to solve the quad-curl problem.  In details, we describe the elements and estimate the interpolation error. Furthermore, we obtain a optimal error estimate for the numerical scheme. The numerical experiments are provided to show the correctness of our theory and the efficiency of our scheme which is compared with the mixed finite element method in \cite{Sun2016A} and the Hodge decomposition method \cite{Brenner2017Hodge}. In the experiments, we find that there exist some superconvergence phenomena, which will be given a detailed proof  in our future work. Besides, we are also interested in the $H^2$(curl)-conforming finite elements in 3-D.


\section*{Appendix}
\begin{figure}
	\centering
	\includegraphics[width=0.4\linewidth, height=0.2\textheight]{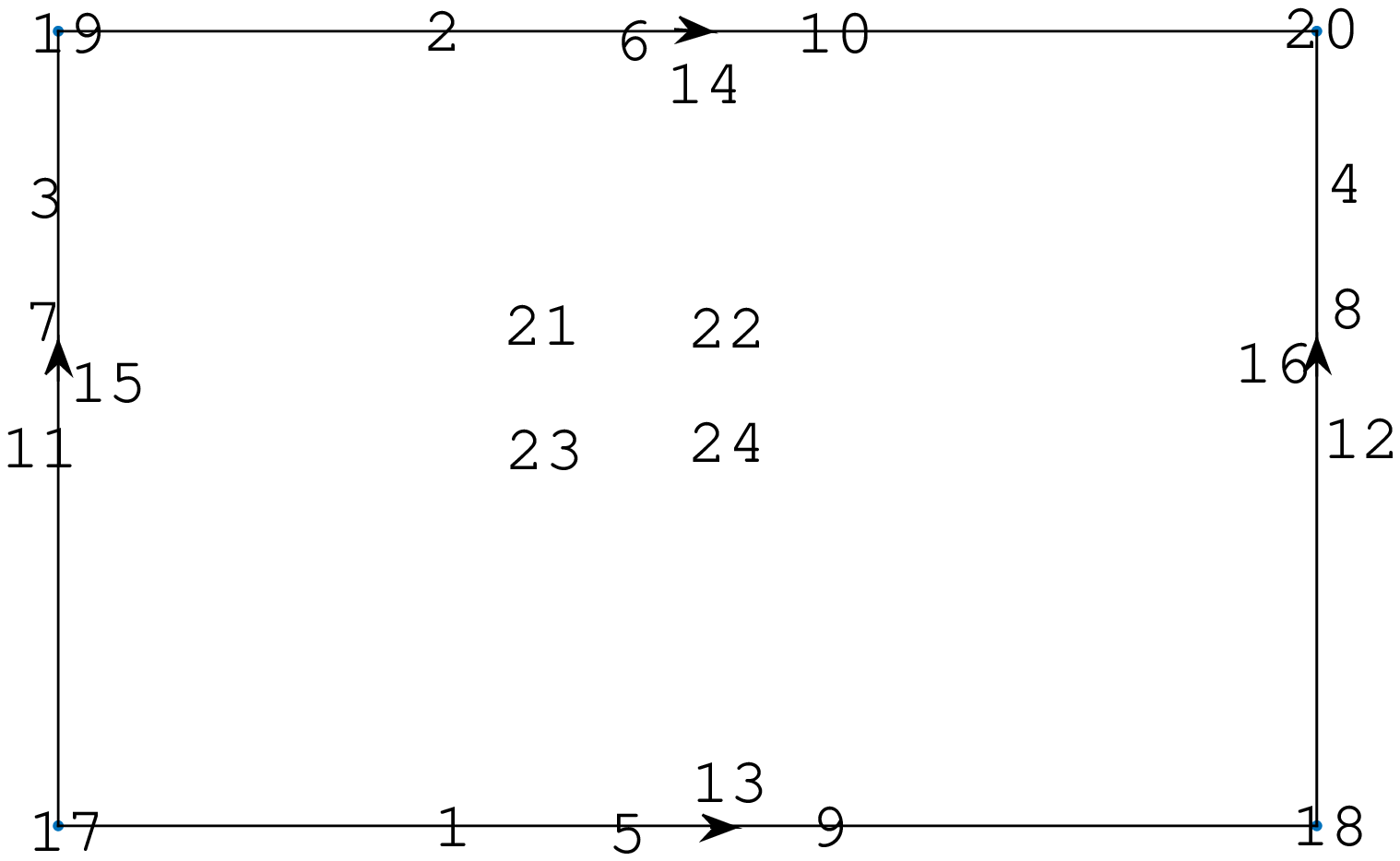}
	\includegraphics[width=0.4\linewidth, height=0.2\textheight]{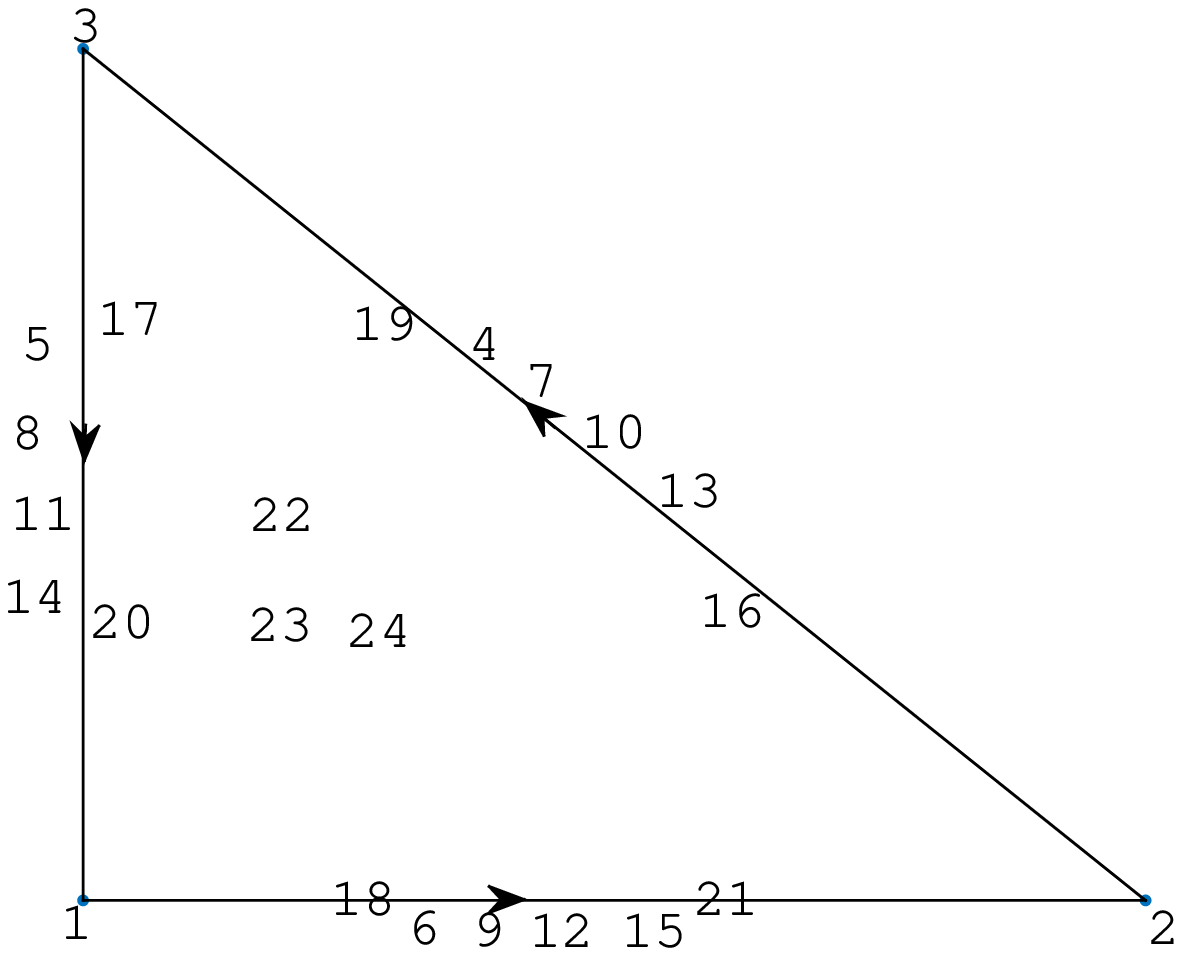}
	\caption{The sequence of DOFs. The arrows represent directions of tangent vectors. 13, 14, 15, 16, 17, 18, 19, 20 in the left and 1, 2, 3, 16, 17, 18, 19, 20, 21 in the right are the node DOFs. }
	\label{fig1}
\end{figure}

Now we list the basis functions of the  lowest-order $H^2$(curl)-conforming finite element. According to the interpolation conditions, the basis functions $\phi_i=(\phi^1_i,\phi^2_i)$ ($i=1,\;2,\cdots,\; 24$) in rectangular $\hat{K}$ can be founded analytically as follows. Note that the order of DOFs is shown in Figure \ref{fig1}.
\begin{align*}
\phi^1_1&=207 x^2 y/128 - 87 x^2 y^3 /128 - 15 x^2 /16 + 45 y^3 /128 - 117 y /128 + 9/16 ,\\
\phi^1_2&=87 x^2 y^3 /128 - 207 x^2 y /128 - 15 x^2 /16 - 45 y^3 /128 + 117 y /128 + 9/16 ,\\
\phi^1_3&=63 x^2 y /128 - 63 x^2 y^3 /128 + 45 y^3 /128 - 45 y /128 ,\\
\phi^1_4&=63 x^2 y^3 /128 - 63 x^2 y /128 - 45 y^3 /128 + 45 y /128 ,\\
\phi^1_5&=9 x y^2 /16 - 3 x y /4 + 3 x /16 ,\\
\phi^1_6&=9 x y^2 /16 + 3 x y /4 + 3 x /16 ,\\
\phi^1_7&=9 x y^2 /16 - 9 x /16 - 3 y^2 /8 + 3/8 ,\\
\phi^1_8&=9 x y^2 /16 - 9 x /16 + 3 y^2 /8 - 3/8 ,\\
\phi^1_9&=45 x^2 y^2 /16 - 225 x^2 y^3 /128 - 135 x^2 y /128 + 75 y^3 /128 - 15 y^2 /16 + 45 y /128 ,\\
\phi^1_{10}&=225 x^2 y^3 /128 + 45 x^2 y^2 /16 + 135 x^2 y /128 - 75 y^3 /128 - 15 y^2 /16 - 45 y /128 ,\\
\phi^1_{11}&=225 x^2 y /128 - 225 x^2 y^3 /128 + 15 x y^3 /8 - 15 x y /8 - 45 y^3 /128 + 45 y /128 ,\\
\phi^1_{12}&=225 x^2 y^3 /128 - 225 x^2 y /128 + 15 x y^3 /8 - 15 x y /8 + 45 y^3 /128 - 45 y /128 ,\\
\phi^1_{13}&=7 x^2 y^3 /16 - x^2 y^2 /2 - 7 x^2 y /16 + x^2/2 - 5 y^3 /16 + y^2/3 + 5 y /16 - 1/3 ,\\
\phi^1_{14}&=7 x^2 y^3 /16 + x^2 y^2 /2 - 7 x^2 y /16 - x^2/2 - 5 y^3 /16 - y^2/3 + 5 y /16 + 1/3 ,\\
\phi^1_{15}&=3 x^2 y /16 - 3 x^2 y^3 /16 + x y^3 /6 - x y /6 + y^3/16 - y/16 ,\\
\phi^1_{16}&=3 x^2 y /16 - 3 x^2 y^3 /16 - x y^3 /6 + x y /6 + y^3/16 - y/16 ,\\
\phi^1_{17}&=(- x^2 y^3 - 2 x^2 y^2 + x^2 y + 2 x^2 + 4 x y^3 - 2 x y^2 - 4 x y + 2 x - y^3 + 2 y^2 + y - 2)/32,\\
\phi^1_{18}&=(- x^2 y^3 - 2 x^2 y^2 + x^2 y + 2 x^2 - 4 x y^3 + 2 x y^2 + 4 x y - 2 x - y^3 + 2 y^2 + y - 2)/32,\\
\phi^1_{19}&=(- x^2 y^3 + 2 x^2 y^2 + x^2 y - 2 x^2 + 4 x y^3 + 2 x y^2 - 4 x y - 2 x - y^3 - 2 y^2 + y + 2)/32,\\
\phi^1_{20}&=(- x^2 y^3 + 2 x^2 y^2 + x^2 y - 2 x^2 - 4 x y^3 - 2 x y^2 + 4 x y + 2 x - y^3 - 2 y^2 + y + 2)/32,\\
\phi^1_{21}&=9 x /16 - 9 x y^2 /16 ,\\
\phi^1_{22}&=45 x^2 /16 - 45 x^2 y^2 /16 + 15 y^2 /16 - 15/16 ,\\
\phi^1_{23}&=15 x y /8 - 15 x y^3 /8 ,\\
\phi^1_{24}&=675 x^2 y /64 - 675 x^2 y^3 /64 + 225 y^3 /64 - 225 y /64.
\end{align*}
\begin{align*}
\phi^2_{1}&=45 x^3 /128 -  63 x^3 y^2 /128 +  63 x y^2 /128 -  45 x /128 ,\\
\phi^2_{2}&=63 x^3 y^2 /128 -  45 x^3 /128 -  63 x y^2 /128 +  45 x /128 ,\\
\phi^2_{3}&=45 x^3 /128 -  87 x^3 y^2 /128 +  207 x y^2 /128 -  117 x /128 -  15 y^2 /16 + 9/16 ,\\
\phi^2_{4}&= 87 x^3 y^2 /128 -  45 x^3 /128 -  207 x y^2 /128 +  117 x /128 -  15 y^2 /16 + 9/16 ,\\
\phi^2_{5}&=9 x^2 y /16 -  9 y /16 -  3 x^2 /8 + 3/8 ,\\
\phi^2_{6}&=9 x^2 y /16 -  9 y /16 +  3 x^2 /8 - 3/8 ,\\
\phi^2_{7}&=9 y x^2 /16 -  3 y x /4 +  3 y /16 ,\\
\phi^2_{8}&=9 y x^2 /16 +  3 y x /4 +  3 y /16 ,\\
\phi^2_{9}&=15 x^3 y /8 -  225 x^3 y^2 /128 -  45 x^3 /128 +  225 x y^2 /128 -  15 x y /8 +  45 x /128 ,\\
\phi^2_{10}&=225 x^3 y^2 /128 +  15 x^3 y /8 +  45 x^3 /128 -  225 x y^2 /128 -  15 x y /8 -  45 x /128 ,\\
\phi^2_{11}&=75 x^3 /128 -  225 x^3 y^2 /128 +  45 x^2 y^2 /16 -  15 x^2 /16 -  135 x y^2 /128 +  45 x /128 ,\\
\phi^2_{12}&=225 x^3 y^2 /128 -  75 x^3 /128 +  45 x^2 y^2 /16 -  15 x^2 /16 +  135 x y^2 /128 -  45 x /128 ,\\
\phi^2_{13}&=3 x^3 y^2 /16 -  x^3 y /6 - x^3/16 -  3 x y^2 /16 +  x y /6 + x/16 ,\\
\phi^2_{14}&=3 x^3 y^2 /16 +  x^3 y /6 - x^3/16 -  3 x y^2 /16 -  x y /6 + x/16 ,\\
\phi^2_{15}&=5 x^3 /16 -  7 x^3 y^2 /16 +  x^2 y^2 /2 - x^2/3 +  7 x y^2 /16 -  5 x /16 - y^2/2 + 1/3 ,\\
\phi^2_{16}&=5 x^3 /16 -  7 x^3 y^2 /16 -  x^2 y^2 /2 + x^2/3 +  7 x y^2 /16 -  5 x /16 + y^2/2 - 1/3 ,\\
\phi^2_{17}&=(x^3 y^2 - 4 x^3 y + x^3 + 2 x^2 y^2 + 2 x^2 y - 2 x^2 - x y^2 + 4 x y - x - 2 y^2 - 2 y + 2)/32,\\
\phi^2_{18}&=(x^3 y^2 - 4 x^3 y + x^3 - 2 x^2 y^2 - 2 x^2 y + 2 x^2 - x y^2 + 4 x y - x + 2 y^2 + 2 y - 2)/32,\\
\phi^2_{19}&=(x^3 y^2 + 4 x^3 y + x^3 + 2 x^2 y^2 - 2 x^2 y - 2 x^2 - x y^2 - 4 x y - x - 2 y^2 + 2 y + 2)/32,\\
\phi^2_{20}&=(x^3 y^2 + 4 x^3 y + x^3 - 2 x^2 y^2 + 2 x^2 y + 2 x^2 - x y^2 - 4 x y - x + 2 y^2 - 2 y - 2)/32,\\
\phi^2_{21}&=9 y /16 -  9 x^2 y /16 ,\\
\phi^2_{22}&=15 x y /8 -  15 x^3 y /8 ,\\
\phi^2_{23}&=15 x^2 /16 -  45 x^2 y^2 /16 +  45 y^2 /16 - 15/16 ,\\
\phi^2_{24}&=225 x^3 /64 -  675 x^3 y^2 /64 +  675 x y^2 /64 -  225 x /64.
\end{align*}

Similarly, we can obtain the standard basis functions in the triangle $\hat{K}$ which is denoted as $\phi_i=(\phi^1_i,\phi^2_i)$ ($i=1,\;2,\cdots,\; 24$).  The order of DOFs is shown in Figure \ref{fig1}.
\begin{align*}
\phi^1_{1}=&   9x^3y /10 +  23x^2y^2 /10 -  5x^2y /2 +  23xy^3 /10 - 4xy^2  + 59xy /30 +  9y^4 /10 -  13y^3 /6 \\
&+  53y^2 /30 - y/2 ,\\
\phi^1_{2}=&   71x^2y /55 -  2x^2y^2 /5 -  9x^3y /10 -  2xy^3 /5 +  83xy^2 /110 -  2191xy /3300 -  101y^3 /660 \\
&+  333y^2 /2200 + y/600 ,\\
\phi^1_{3}=&   101x^2y /220 -  2x^2y^2 /5 -  2xy^3 /5 +  27xy^2 /110 -  333xy /1100 -  9y^4 /10 +  353y^3 /330 \\
&-  1109y^2 /6600 - y/600 ,\\
\phi^1_{4}=&  24x^2y^2 + 3x^2y + 24xy^3 - 30xy^2 - 5xy - y^3 +  5y^2 /2 ,\\
\phi^1_{5}=&  24x^2y^2 + 3x^2y + 24xy^3 - 30xy^2 - 5xy - y^3 +  5y^2 /2 ,\\
\phi^1_{6}=&  24x^2y^2 + 3x^2y - 15x^2 + 24xy^3 - 30xy^2 - 5xy + 15x - y^3 +  5y^2 /2 - 3/2 ,\\
\phi^1_{7}=&   1800x^2y /11 +  90xy^2 /11 -  1860xy /11 +  600y^3 /11 -  930y^2 /11 + 45y ,\\
\phi^1_{8}=&   -1260x^2y /11 +  630xy^2 /11 + 708xy /11 - 420y^3 /11 + 354y^2 /11 - 9y ,\\
\phi^1_{9}=&  630x^2 -  1260x^2y /11 - 420x^3 +  630xy^2 /11 +  708xy /11 - 240x -  420y^3 /11 +  354y^2 /11 \\
&- 9y + 15 ,\\
\phi^1_{10}=&  60xy - 30y^2 ,\\
\phi^1_{11}=&   1440xy^2 /11 -  900x^2y /11 -  60xy /11 +  1020y^3 /11 -  2010y^2 /11 + 60y ,\\
\phi^1_{12}=&( - 3060 x^2 y + 1980 x^2 - 1440 x y^2 + 4020 x y - 1980 x + 300 y^3 + 30 y^2 - 660 y + 330)/11,\\
\phi^1_{13}=&   4200xy^2 /11 -  8400x^2y /11 +  5600xy /11 -  2800y^3 /11 +  2800y^2 /11 - 140y ,\\
\phi^1_{14}=&   -4200xy^2 /11 +  8400x^2y /11 - 5600xy /11 +2800y^3 /11 -  2800y^2 /11 + 140y ,\\
\phi^1_{15}=&  2800x^3 +  8400x^2y /11 - 4200x^2 -  4200xy^2 /11 -  5600xy /11 + 1680x +  2800y^3 /11\\
& -  2800y^2 /11 + 140y - 140 ,\\
\phi^1_{16}=&( - 1584 x^2 y^2 + 1062 x^2 y - 396 x y^3 + 1152 x y^2 - 906 x y + 24 y^3 - 123 y^2 + 99 y)/440,\\
\phi^1_{17}=&   9xy^3 /5 -  81x^2y /110 -  9x^2y^2 /10 -  54xy^2 /55 +  567xy /550 +  27y^4 /10 -  261y^3 /55 \\
&+  2547y^2 /1100 -  27y /100 ,\\
\phi^1_{18}=&   387x^2y /55 -  63x^2y^2 /10 -  27x^3y /10 -  18xy^3 /5 +  351xy^2 /55 -  1953xy /550 -  36y^3 /55\\
& +  1017y^2 /1100 -  27y /100 ,\\
\phi^1_{19}=& (- 396 x^2 y^2 - 72 x^2 y - 1584 x y^3 + 828 x y^2 + 246 x y - 354 y^3 + 453 y^2 - 99 y)/440,\\
\phi^1_{20}=&   108x^2y /55 -  18x^2y^2 /5 -  63xy^3 /10 +  783xy^2 /110 -  1017xy /550 -  27y^4 /10 +  567y^3 /110\\
& -  2997y^2 /1100 +  27y /100 ,\\
\phi^1_{21}=&   27x^3y /10 +  9x^2y^2 /5 -  207x^2y /55 -  9xy^3 /10 +  54xy^2 /55 -  36xy /275 +  27y^3 /110 \\
&-  567y^2 /1100 +  27y /100 ,\\
\phi^1_{22}=&   1152xy /11 -  720xy^2 /11 -  540x^2y /11 -  180y^3 /11 +  576y^2 /11 - 36y ,\\
\phi^1_{23}=&   3060x^2y /11 +  1440xy^2 /11 -  3360xy /11 -  300y^3 /11 -  360y^2 /11 + 60y ,\\
\phi^1_{24}=&   1440xy^2 /11 -  900x^2y /11 -  720xy /11 +  1020y^3 /11 -  1680y^2 /11 + 60y .
\end{align*}

\begin{align*}
\phi^2_{1}=&  13x^3 /6 -  23x^3y /10 -  9x^4 /10 -  23x^2y^2 /10 + 4x^2y -  53x^2 /30 -  9xy^3 /10 +  5xy^2 /2\\
& -  59xy /30 + x/2 ,\\
\phi^2_{2}=&  9x^4 /10 +  2x^3y /5 -  353x^3 /330 +  2x^2y^2 /5 -  27x^2y /110 +  1109x^2 /6600 -  101xy^2 /220\\
& +  333xy /1100 + x/600 ,\\
\phi^2_{3}=&  2x^3y /5 +  101x^3 /660 +  2x^2y^2 /5 -  83x^2y /110 -  333x^2 /2200 +  9xy^3 /10 -  71xy^2 /55 \\
&+  2191xy /3300 - x/600 ,\\
\phi^2_{4}=& x^3 - 24x^3y - 24x^2y^2 + 30x^2y -  5x^2 /2 - 3xy^2 + 5xy ,\\
\phi^2_{5}=& x^3 - 24x^3y - 24x^2y^2 + 30x^2y -  5x^2 /2 - 3xy^2 + 5xy + 15y^2 - 15y + 3/2 ,\\
\phi^2_{6}=& x^3 - 24x^3y - 24x^2y^2 + 30x^2y -  5x^2 /2 - 3xy^2 + 5xy ,\\
\phi^2_{7}=&  600x^3 /11 +  90x^2y /11 -  930x^2 /11 +  1800xy^2 /11 -  1860xy /11 + 45x ,\\
\phi^2_{8}=& - 420x^3 /11 +  630x^2y /11 +  354x^2 /11 -  1260xy^2 /11 + 708xy /11 - 9x - 420y^3 +630y^2\\
& -240y +15 ,\\
\phi^2_{9}=&  630x^2y /11 -  420x^3 /11 +  354x^2 /11 -  1260xy^2 /11 +  708xy /11 - 9x ,\\
\phi^2_{10}=& 30x^2 - 60yx ,\\
\phi^2_{11}=&(- 300 x^3 + 1440 x^2 y - 30 x^2 + 3060 x y^2 - 4020 x y + 660 x - 1980 y^2 + 1980 y - 330)/11 ,\\
\phi^2_{12}=&  2010x^2 /11 -  1440x^2y /11 -  1020x^3 /11 +  900xy^2 /11 +  60xy /11 - 60x ,\\
\phi^2_{13}=&  4200x^2y /11 -  2800x^3 /11 +  2800x^2 /11 -  8400xy^2 /11 +  5600xy /11 - 140x ,\\
\phi^2_{14}=&  -4200x^2y /11 +  2800x^3 /11 -  2800x^2 /11 +  8400xy^2 /11 - 5600xy /11 + 140x + 2800y^3\\
&  - 4200y^2+ 1680y - 140 ,\\
\phi^2_{15}=&  2800x^3 /11 -  4200x^2y /11 -  2800x^2 /11 +  8400xy^2 /11 -  5600xy /11 + 140x ,\\
\phi^2_{16}=&(1584 x^3 y + 354 x^3 + 396 x^2 y^2 - 828 x^2 y - 453 x^2 + 72 x y^2 - 246 x y + 99 x)/440 ,\\
\phi^2_{17}=&  9x^3y /10 -  27x^3 /110 -  9x^2y^2 /5 -  54x^2y /55 +  567x^2 /1100 -  27xy^3 /10 +  207xy^2 /55 \\
&+  36xy /275 -  27x /100 ,\\
\phi^2_{18}=&  27x^4 /10 +  63x^3y /10 -  567x^3 /110 +  18x^2y^2 /5 -  783x^2y /110 +  2997x^2 /1100 -  108xy^2 /55\\
& +  1017xy /550 -  27x /100 ,\\
\phi^2_{19}=&  (396 x^3 y - 24 x^3 + 1584 x^2 y^2 - 1152 x^2 y + 123 x^2 - 1062 x y^2 + 906 x y - 99 x)/440 ,\\
\phi^2_{20}=&  18x^3y /5 +  36x^3 /55 +  63x^2y^2 /10 -  351x^2y /55 -  1017x^2 /1100 +  27xy^3 /10 -  387xy^2 /55 \\
&+  1953xy /550 +  27x /100 ,\\
\phi^2_{21}=&  261x^3 /55 -  9x^3y /5 -  27x^4 /10 +  9x^2y^2 /10 +  54x^2y /55 -  2547x^2 /1100 +  81xy^2 /110\\
& -  567xy /550 +  27x /100 ,\\
\phi^2_{22}=&  576x^2 /11 -  720x^2y /11 -  180x^3 /11 -  540xy^2 /11 +  1152xy /11 - 36x ,\\
\phi^2_{23}=&  1020x^3 /11 +  1440x^2y /11 -  1680x^2 /11 -  900xy^2 /11 -  720xy /11 + 60x ,\\
\phi^2_{24}=&  1440x^2y /11 -  300x^3 /11 -  360x^2 /11 +  3060xy^2 /11 -  3360xy /11 + 60x .
\end{align*}

\end{document}